\newtheorem{thm}{Theorem}[section]
\newtheorem{lem}[thm]{Lemma}
\newtheorem{cor}[thm]{Corollary}
\newtheorem{prop}[thm]{Proposition}
\newtheorem{rem}[thm]{Remark}
\newtheorem{defn}[thm]{Definition}
\newcommand{\R}{\mathbb{R}}
\newcommand{\C}{\mathbb C}
\newcommand{\Z}{\mathbb{Z}}
\newcommand{\N}{\mathbb{N}}
\newcommand{\T}{\mathbb{\partial\mathbb{D}}}
\newcommand{\D}{\mathbb{D}}
\renewcommand{\d}{\partial}
\newcommand{\supp}{\operatorname{supp}}
\DeclareMathOperator{\sgn}{sgn}
\newcommand{\abs}[1]{\left|#1\right|}
\newcommand{\set}[1]{\left\{#1\right\}}
\begin{document}
\title{Angelesco and AT systems on the unit circle 
}
\author{Rostyslav Kozhan$^1$}
\email{kozhan@math.uu.se}
\author{Marcus Vaktnäs$^{1}$}
\email{marcus.vaktnas@math.uu.se}
\address{$^{1}$Department of Mathematics, Uppsala University, S-751 06 Uppsala, Sweden}
\date{\today}
\begin{abstract}
    We introduce the concept of Laurent multiple orthogonality on the unit circle and define Angelesco and AT systems in this setting.
    Using a generalized Andreief identity, we establish normality of all multi-indices for any such system, thereby ensuring existence and uniqueness of Laurent multiple orthogonal polynomials of type I and type II at every location.
    As an application, we demonstrate existence and uniqueness of the approximants for two natural two-point Hermite-Padé problems -- type~I and type~II -- arising in the simultaneous rational approximation of $r$ Carath\'{e}odory functions.

\end{abstract}
\maketitle

\section{Introduction 
}


Given a system of probability measures $\bm{\mu}=(\mu_1,\ldots,\mu_r)$ on the real line and a multi-index $\bm{n} = (n_1,\dots,n_r) \in \N^r$ (where $\N=\set{0,1,2,\dots}$),  multiple orthogonal polynomials on the real line are defined as the 
univariate polynomials $P_{\bm{n}}(x)$ of degree at most $\abs{\bm{n}} := n_1 + \ldots + n_r$ that satisfy simultaneous orthogonality conditions
\begin{equation}\label{eq:MOPRL}
    \int_\R P_{\bm{n}}(x) x^k d\mu_j(x) = 0, \qquad k=0,1,\ldots,n_j-1,
\end{equation}
for each $j=1,\ldots,r$. If $r=1$ then the polynomials $P_{\bm{n}}(x)$ reduce to the usual orthogonal polynomials associated with a single measure on the real line.

Such polynomials arise naturally as solutions to the Hermite--Padé approximation problem: given $r$ Markov functions (i.e., Cauchy/Stieltjes transforms of the measures $\mu_j$)
$$
f_j(z) = \int_\R \frac{d\mu_j(x)}{z-x},
$$
find a polynomial $P_{\bm{n}}$ of degree $\le |\bm{n}|$ and polynomials $Q_{\bm{n},j}$ of degrees $\le |\bm{n}|-1$ ($j=1,\ldots,r$) such that
\begin{equation}
    P_{\bm{n}}(z) f_j(z) - Q_{\bm{n},j}(z) =\mathcal{O}(z^{-n_j-1}), \qquad z\to\infty,
\end{equation}
simultaneously for each $j=1,\ldots,r$. For more details, see, e.g.,~\cites{bookNS,VAPade}.

Today, the theory of multiple orthogonal polynomials on the real line is a very well-developed subject with applications not only in approximation theory but also in spectral theory, random matrix theory, and integrable probability (see \cite{Applications} for a short introduction, and \cite{Aptekarev,Ismail} for more comprehensive treatments). One of the fundamental questions in this theory is whether the  {\it monic} polynomial satisfying~\eqref{eq:MOPRL} is unique. In that case, its degree equals $|\bm{n}|$, and the index $\bm{n}$ is said to be normal. Unlike the case $r=1$, normality does not hold in general. 

Nevertheless, there are two broad classes of systems 
for which every multi-index $\bm{n}\in\N^r$ is normal. 
The first class consists of systems of measures whose supports lie on pairwise disjoint intervals. These are called Angelesco systems~\cite{Angelesco}. The second class consists of systems of measures  known as  AT (algebraic Chebyshev) systems, in which the measures are absolutely continuous with respect to some fixed measure $\mu$ on $\R$ and whose Radon--Nikodym derivatives satisfy certain determinantal conditions (see, e.g.,~\cite[Sect 23.1.2]{Ismail} for further details). The vast majority of explicit systems of multiple orthogonal polynomials that are studied in the literature belong to these two classes (which also include Nikishin systems). 

\smallskip

In this paper we study multiple orthogonality on the complex unit circle.  
Let $\bm{\mu} = (\mu_1,\ldots,\mu_r)$ be a system of probability measures on $\d\D = \{z\in \C: |z|=1\}$ and $\bm{n} = (n_1,\dots,n_r) \in \N^r$. 
Multiple orthogonal polynomials on the unit circle were introduced by M\'{i}nguez and Van Assche in~\cite{MOPUC1} as follows. 


\begin{defn}\label{def:normal A}
We say that a multi-index $\bm{n}$ is $\Phi$-normal with respect to $\bm{\mu}$ if there exists a unique monic polynomial $\Phi_{\bm{n}}$ (called the type II multiple orthogonal polynomial) 
of the form
\begin{equation}\label{eq:monic polyomials def A}
    \Phi_{\bm{n}}(z) = z^{\abs{\bm{n}}} + k_{\abs{\bm{n}}-1}z^{\abs{\bm{n}}-1}+\ldots+k_{0},
\end{equation}
which satisfies the orthogonality relations
\begin{equation}\label{eq:type II def A}
    \int_\T \Phi_{\bm{n}}(z)z^{-k}d\mu_j(z) = 0, \qquad k = 0,1,\ldots,n_j-1, \qquad j = 1,\dots,r.
\end{equation}
\end{defn}

In~\cite{MOPUC1,MOPUC2,KVMOPUC}, properties of such polynomials were established, including recurrence relations, compatibility conditions, a Christoffel--Darboux formula, and the associated Riemann--Hilbert and Hermite--Pad\'{e} problems. We describe the latter, as introduced in~\cite{MOPUC1}. Given $r$ Carath\'{e}odory functions, defined by
\begin{equation}\label{eq:caratheodory function}
    F_j(z) = \int_\T \frac{w+z}{w-z}d\mu_j(w), \qquad z\in\C\setminus\T,
\end{equation}
the polynomials $\Phi_{\bm{n}}$ together with certain polynomials $\Psi_{\bm{n},j}$ of degree $\le |\bm{n}|$ ($j=1,\ldots,r$) solve 
\begin{alignat}{3}
    \label{eq:HPT1}
    &\Phi_{\bm{n}}(z) F_j(z) + \Psi_{\bm{n},j}(z) =\mathcal{O}(z^{n_j}), \qquad &&z\to 0, \\
    \label{eq:HPT2}
    &\Phi_{\bm{n}}(z) F_j(z) + \Psi_{\bm{n},j}(z) =\mathcal{O}(z^{-1}), \qquad &&z\to\infty,
\end{alignat}
for each $j=1,\ldots,r$. This approximation problem is called {\it  two-point}, as it involves approximating at $0$ and at $\infty$ simultaneously. 

One of the main limitations of
the theory was   
the lack of examples of $\Phi$-perfect systems, i.e., 
systems for which every $\bm{n} \in \N^r$ is $\Phi$-normal.
To the best of our knowledge,  the only such example 
was given in~\cite[Sect. 3]{MOPUC2}, namely $(\mu_1,\mu_2)$, where $\mu_1$ is the orthogonality measure of the Rogers--Szeg\H{o} polynomials and $\mu_2$ is the Lebesgue measure.

In pursuit of more examples, we propose to work with Laurent orthogonality. This leads to the following  definition, for which we are able to generate a large class of systems where we have normality at every location.



\begin{defn}\label{def:normal B}
We say that a multi-index $\bm{n}$ is $\phi$-normal with respect to $\bm{\mu}$ if
there exists a unique function of the form
\begin{equation}\label{eq:monic type II mod}
        \phi_{\bm{n}}(z) = z^{\abs{\bm{n}}/2} + \kappa_{\abs{\bm{n}}/2-1}z^{\abs{\bm{n}}/2-1} + \ldots + \kappa_{-\abs{\bm{n}}/2}z^{-\abs{\bm{n}}/2},
\end{equation} 
which satisfies the orthogonality relations
\begin{equation}\label{eq:type II def mod}
    \int \phi_{\bm{n}}(z)z^{-k}d\mu_j(z) = 0, \quad k = -n_j/2,-n_j/2+1,\ldots,n_j/2-1.
\end{equation}
\end{defn}
Note that if $r=1$, then $\phi_n(z)$ is simply $z^{-n/2} \Phi_n(z)$, so that $\phi_n$ and $\Phi_n$ carry the same information. If $r\ge 2$, the relationship between $\phi_{\bm{n}}$ and $\Phi_{\bm{n}}$ becomes highly non-trivial.

The main result of our paper 
is 
that every multi-index $\bm{n} \in \N^r$ is {$\phi$-}normal for two wide classes of systems: Angelesco systems, where the supports of the measures are pair-wise disjoint (see Section~\ref{ss:Angelesco}), and AT systems (see Section~\ref{ss:AT}). 
In a companion paper~\cite{KNik}, we introduce Nikishin systems on the unit circle and show that they satisfy the AT property whenever  $\bm{n}$ has same-parity components with $n_1\ge n_2\ge\cdots \ge n_r$ or with $r=2$.

Just like $\Phi_{\bm{n}}$, the function $\phi_{\bm{n}}$ appears naturally as a solution to a two-point Hermite--Pad\'{e} problem. For the case of $n_j$'s even, it takes the form
\begin{alignat}{3}
\label{eq:HPT3}
    & \phi_{\bm{n}}(z)F_j(z) + \psi_{\bm{n},j}(z) = \mathcal{O}(z^{n_j/2}), \qquad && z \rightarrow 0, \\\label{eq:HPT4}
    & \phi_{\bm{n}}(z)F_j(z) + \psi_{\bm{n},j}(z) = \mathcal{O}(z^{-n_j/2-1}), \qquad && z \rightarrow \infty,
\end{alignat}
(compare with~\eqref{eq:HPT1}--\eqref{eq:HPT2}). In particular, if $\bm{\mu}$ is any Angelesco or AT system, then we show that the solution of this problem is unique, up to a multiplicative normalization. 

It can be argued that the functions $\phi_{2\bm{n}}$ are particularly well-behaving. Since every component $n_j$ is even, they are all Laurent polynomials, and the orthogonality conditions are all  with respect to integer powers. The Hermite--Pad\'{e} problem also takes a simpler form in this case. 
For the AT systems, however, it seems more natural to consider $\phi$'s at the locations 
with all odd components $n_j$, as such multi-indices correspond to the periodic AT property, see~\cite{KreNud}. It is therefore particularly striking that any multi-index, with an arbitrary combination of even and odd components, is $\phi$-normal for any Angelesco and any AT system.

In subsequent work~\cite{KVHP}, we study a related, though distinct from~\eqref{eq:monic type II mod}--\eqref{eq:type II def mod}, setting of Laurent multiple orthogonality on the unit circle. In that framework, we establish Szeg\H{o}-type recurrence relations and compatibility conditions for the associated recurrence coefficients, derive Christoffel--Darboux formulas and Heine-type determinantal representations, and ultimately obtain extensions of the Szeg\H{o} mapping and the Geronimus relations. Independently of our work, a recent manuscript~\cite{HueMan} investigates Laurent multiple orthogonality from yet another perspective. Within a broader framework of mixed orthogonality, the authors analyze recurrence relations, Christoffel--Darboux formulas, and spectral transformations. 


The organization of our paper is as follows. In Section~\ref{ss:normality} we define the type I and type II multiple orthogonal functions $\phi_{\bm{n}}$, $\bm{\xi}_{\bm{n}}$, along with their ``reversed'' versions $\phi^\sharp_{\bm{n}}$, $\bm{\xi}^\sharp_{\bm{n}}$. Section~\ref{ss:andreief}
contains some technical determinantal results, featuring a generalization of the Andreief identity, which is our main tool for showing normality.  In Section~\ref{ss:Angelesco} we define Angelesco systems on the unit circle and prove their perfectness, while in Section~\ref{ss:AT} we do the same for the AT systems on the unit circle. In Section~\ref{ss:HPphi} we describe the two-point Hermite--Pad\'{e} problem that is equivalent to the type II $\phi$-orthogonality, and in Section~\ref{ss:HPtypeI} we do the same for type I.

\smallskip

\textbf{Acknowledgments}: The authors are grateful to M.~Ma\~{n}as, A.~Foulqui\'{e}-Moreno, A.~Branquinho, W.~Van Assche, and R.~Cruz-Barroso for useful questions and discussions during the OPSFA-17 meeting in Granada (Spain). 




\section{Normality and Moment Matrices}\label{ss:normality}

We work with a fixed branch of the square root function 
\begin{equation}\label{eq:sqrt}
z^{1/2}=|z|^{1/2} \exp({i \arg_{[t_0,t_0+2\pi)}} z/2)
\end{equation}
for some chosen $t_0\in\R$. {We stress that a choice of the square root function impacts the inner products~\eqref{eq:type II def mod}  and the functions~\eqref{eq:monic type II mod} (unless $\bm{n}$ has all $n_j$'s even). As a result, the notion of $\phi$-normality in Definition~\ref{def:normal B} depends on this choice of the square root function.}

From now on, unless stated otherwise, we will use the term {\it normal} 
to refer to {\it $\phi$-normal} 
(see Definition~\ref{def:normal B}).

The index $\bm{0}$ is considered to be normal  for any system $\bm{\mu}$, with $\phi_{\bm{0}} = 1$. For $\bm{n} \neq \bm{0}$, let us introduce the $|\bm{n}|\times |\bm{n}|$
matrix 
    \begin{equation}\label{eq:moment matrix mod}
        M_{\bm{n}} =
        \left(\begin{array}{c}
            T^{(1)}_{n_1,|\bm{n}|} 
            \\[0.4em]
        \hdashline[0.5pt/2pt]
        \\[-1.2em]
         \vdots \\[0.2em]
        \hdashline[0.5pt/2pt]
        \\[-1.0em]
            T^{(r)}_{n_r,|\bm{n}|}
        \end{array}
        \right),
    \end{equation}
    where $T^{(j)}_{n_j,|\bm{n}|}$ is the $n_j\times |\bm{n}|$ matrix given by
    \begin{equation}\label{eq:H1}
         T^{(j)}_{n_j,|\bm{n}|} =
         \begin{pmatrix}
            \int z^{-\abs{\bm{n}}/2}z^{n_j/2} d\mu_j(z) & \cdots & \int z^{\abs{\bm{n}}/2-1}z^{n_j/2} d\mu_j(z) \\
            \int z^{-\abs{\bm{n}}/2}z^{n_j/2-1} d\mu_j(z) & \cdots & \int z^{\abs{\bm{n}}/2-1}z^{n_j/2-1} d\mu_j(z) \\
            \vdots & \ddots & \vdots \\ 
            \int z^{-\abs{\bm{n}}/2}z^{-n_j/2+1} d\mu_j(z) & \cdots & \int z^{\abs{\bm{n}}/2-1}z^{-n_j/2+1} d\mu_j(z) \\
        \end{pmatrix}
    \end{equation}
    


The linear system of equations \eqref{eq:type II def mod}, where we treat the $\kappa$-coefficients as unknowns, has the $|\bm{n}|\times |\bm{n}|$ coefficient matrix equal to $M_{\bm{n}}$ introduced above. Hence we get the following result.

    \begin{prop}\label{normality condition}
        $\bm{n} \neq \bm{0}$ is normal if and only if $\det M_{\bm{n}} \neq 0$.
    \end{prop}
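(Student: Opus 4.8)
The plan is to reduce $\phi$-normality to the invertibility of the square matrix $M_{\bm{n}}$, after which the statement becomes the standard linear-algebra dichotomy for square systems. First I would record that a function of the form \eqref{eq:monic type II mod} is completely determined by its vector of free coefficients $\bm{\kappa} = (\kappa_{-\abs{\bm{n}}/2}, \ldots, \kappa_{\abs{\bm{n}}/2-1}) \in \C^{\abs{\bm{n}}}$, since the top coefficient (of $z^{\abs{\bm{n}}/2}$) is pinned to $1$. Thus $\bm{n}$ is normal precisely when there is a unique such $\bm{\kappa}$ for which $\phi_{\bm{n}}$ satisfies \eqref{eq:type II def mod}.

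Next I would substitute the expansion $\phi_{\bm{n}}(z) = z^{\abs{\bm{n}}/2} + \sum_{\ell=-\abs{\bm{n}}/2}^{\abs{\bm{n}}/2-1} \kappa_\ell z^\ell$ into \eqref{eq:type II def mod}. The term with the fixed coefficient $1$ produces the constant $b_{j,k} = -\int z^{\abs{\bm{n}}/2 - k}\, d\mu_j(z)$, while the remaining terms give $\sum_\ell \kappa_\ell \int z^{\ell - k}\, d\mu_j(z)$. Hence \eqref{eq:type II def mod} is equivalent to the square linear system $M_{\bm{n}}\, \bm{\kappa} = \bm{b}$ in $\abs{\bm{n}}$ unknowns, where the row indexed by the pair $(j,k)$ (with $j = 1,\dots,r$ and $k = -n_j/2, \dots, n_j/2 - 1$) and the column indexed by $\ell$ carries the entry $\int z^{\ell-k}\, d\mu_j(z)$. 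The only genuine bookkeeping here — and the step I would be most careful with — is checking that this labeling reproduces exactly the block structure \eqref{eq:moment matrix mod}--\eqref{eq:H1}: the $j$-th block $H^{(j)}_{n_j,\abs{\bm{n}}}$ collects the $n_j$ rows coming from $\mu_j$, its rows run over the powers $z^{n_j/2}, z^{n_j/2 - 1}, \dots, z^{-n_j/2+1}$ (i.e.\ over $z^{-k}$) and its columns over $z^{-\abs{\bm{n}}/2}, \dots, z^{\abs{\bm{n}}/2 - 1}$ (i.e.\ over $z^{\ell}$), so that the $(p,q)$ entry equals $\int z^{-\abs{\bm{n}}/2 + (q-1)} z^{n_j/2 - (p-1)}\, d\mu_j(z) = \int z^{\ell - k}\, d\mu_j(z)$. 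The half-integer powers require attention to the fixed branch \eqref{eq:sqrt}, but once that convention is fixed the identification is purely formal and confirms the assertion already made before the statement: $M_{\bm{n}}$ is the coefficient matrix of \eqref{eq:type II def mod}.

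Finally I would invoke the standard fact that a square system $M_{\bm{n}}\,\bm{\kappa} = \bm{b}$ has a unique solution if and only if $\det M_{\bm{n}} \neq 0$. For the converse direction, if $\det M_{\bm{n}} \neq 0$ then $\bm{\kappa} = M_{\bm{n}}^{-1}\bm{b}$ is the unique coefficient vector, so $\phi_{\bm{n}}$ exists and is unique, i.e.\ $\bm{n}$ is normal. For the forward direction I would argue by contraposition: if $\det M_{\bm{n}} = 0$ then $\ker M_{\bm{n}} \neq \{\bm{0}\}$, so either $\bm{b}$ lies outside the column space and no admissible $\phi_{\bm{n}}$ exists, or some solution $\bm{\kappa}_0$ exists and then $\bm{\kappa}_0 + \bm{v}$ solves the system for every $\bm{v}\in\ker M_{\bm{n}}$; in both cases $\phi_{\bm{n}}$ fails to be uniquely determined, so $\bm{n}$ is not normal. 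Since the only non-routine ingredient is the index-matching of the second paragraph, that is where the substance of the argument lies, the rest being the generic invertibility criterion for square matrices.
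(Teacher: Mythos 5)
Your proposal is correct and follows essentially the same route as the paper, which derives Proposition~\ref{normality condition} directly from the observation that \eqref{eq:type II def mod}, with the $\kappa$-coefficients as unknowns, is a square linear system with coefficient matrix $M_{\bm{n}}$. Your careful verification of the row/column indexing against the block structure \eqref{eq:moment matrix mod}--\eqref{eq:H1}, and your explicit treatment of both failure modes when $\det M_{\bm{n}}=0$, simply spell out details the paper leaves implicit.
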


\begin{rem}
    One may want to work with $z^{1/2}\phi_{\bm{n}}(z)$ instead of $\phi_{\bm{n}}(z)$. When $\abs{\bm{n}}$ is odd, we would then always get Laurent polynomials. The orthogonality conditions lead to the coefficient matrix $M_{\bm{n}}$ whose $j$-th block is given by
    \begin{equation}\label{eq:H1 odd}
         \begin{pmatrix}
            \int z^{-(\abs{\bm{n}}-1)/2}z^{(n_j-1)/2} d\mu_j(z) & \cdots & \int z^{(\abs{\bm{n}}-1)/2}z^{(n_j-1)/2} d\mu_j(z) \\
            \int z^{-(\abs{\bm{n}}-1)/2}z^{(n_j-3)/2} d\mu_j(z) & \cdots & \int z^{(\abs{\bm{n}}-1)/2}z^{(n_j-3)/2} d\mu_j(z) \\
            \vdots & \ddots & \vdots \\ 
            \int z^{-(\abs{\bm{n}}-1)/2}z^{-(n_j-1)/2} d\mu_j(z) & \cdots & \int z^{(\abs{\bm{n}}-1)/2}z^{-(n_j-1)/2} d\mu_j(z) \\
        \end{pmatrix}.
    \end{equation}
    This is of course equal to $T^{(j)}_{n_j,\abs{\bm{n}}}$ of \eqref{eq:H1}. We will use this form of $T^{(j)}_{n_j,\abs{\bm{n}}}$ when we compute $\det{M_{\bm{n}}}$ in the next sections. 
\end{rem}
\begin{rem}\label{rem:Phi-vs-phi}
It is easy to see that the following relationship holds between the two notions of normality from Definitions~\ref{def:normal A} and~\ref{def:normal B}. $\bm{n}$ is $\Phi$-normal for $\bm{\mu}=(\mu_j)_{j=1}^r$ if and only if $\bm{n}$ is $\phi$-normal  for $\widehat{\bm{\mu}}=(\widehat\mu_j)_{j=1}^r$ with $d\widehat\mu_j(z)=z^{(n_j-|\bm{n}|)/2}d\mu_j(
z)$. In this case $\Phi_{\bm{n}}(z)$ with respect to $\bm{\mu}$ coincides with $z^{|\bm{n}|/2} {\phi}_{\bm{n}}(z)$ with respect to $\widehat{\bm{\mu}}$.

 \end{rem}

Similarly to multiple orthogonal polynomials on the real line, we can easily obtain the determinantal formula
    \begin{equation}\label{eq:HeineII}
        \phi_{\bm{n}}(z) = \frac{1}{\det M_{\bm{n}}}
        \det
        \left(\begin{array}{ccc}
            &\tilde{T}^{(1)}_{n_1,|\bm{n}|} 
            \\[0.4em]
        \hdashline[0.5pt/2pt]
        \\[-1.2em]
        & \vdots \\[0.2em]
        \hdashline[0.5pt/2pt]
        \\[-1.0em]
            &\tilde{T}^{(r)}_{n_r,|\bm{n}|} 
            \\[0.4em]
        \hdashline[0.5pt/2pt]
        \\[-1.0em]
            z^{-\abs{\bm{n}}/2} & \cdots & z^{\abs{\bm{n}}/2}
        \end{array}\right),
    \end{equation}
    where $\tilde{T}^{(j)}_{n_1,|\bm{n}|}$ is {the $n_j\times (|\bm{n}|+1)$ matrix} given by
    \begin{equation}\label{eq:H1 tilde}
         \tilde{T}^{(j)}_{n_j,|\bm{n}|} = \begin{pmatrix}
            \int z^{-\abs{\bm{n}}/2}z^{n_j/2} d\mu_j(z) & \cdots & \int z^{\abs{\bm{n}}/2}z^{n_j/2} d\mu_j(z) \\
            \int z^{-\abs{\bm{n}}/2}z^{n_j/2-1} d\mu_j(z) & \cdots & \int z^{\abs{\bm{n}}/2}z^{n_j/2-1} d\mu_j(z) \\
            \vdots & \ddots & \vdots \\ 
            \int z^{-\abs{\bm{n}}/2}z^{-n_j/2+1} d\mu_j(z) & \cdots & \int z^{\abs{\bm{n}}/2}z^{-n_j/2+1} d\mu_j(z) \\
        \end{pmatrix}.
    \end{equation}
Indeed, it is easy to see that the right-hand side of \eqref{eq:HeineII} {has the form~\eqref{eq:monic type II mod}}, and satisfies all the orthogonality conditions {\eqref{eq:type II def mod}.} 

We will write $f^\sharp$ for the function 
\[
f^\sharp(z) = \overline{f(1/\bar{z})}.
\]
Then $\phi_{\bm{n}}^\sharp$
has the form
\begin{equation}\label{eq:monic type II mod sharp}
        \phi^\sharp_{\bm{n}}(z) = \bar\kappa_{-\abs{\bm{n}}/2}z^{\abs{\bm{n}}/2} + \ldots+ \bar\kappa_{\abs{\bm{n}}/2-1}z^{-\abs{\bm{n}}/2+1} + z^{-\abs{\bm{n}}/2} ,
\end{equation} 
and satisfies the orthogonality relations
\begin{equation}\label{eq:type II def mod sharp}
    \int \phi^\sharp_{\bm{n}}(z)z^{-k}d\mu_j(z) = 0, \qquad k = -n_j/2+1,-n_j/2+2,\ldots,n_j/2.
\end{equation}
It is clear that normality of $\bm{n}$ is equivalent to the uniqueness and existence of $\phi_{\bm{n}}^\sharp$ with such properties; a quick check shows that the system of orthogonality relations for $\phi_{\bm{n}}^\sharp$ has the coefficient matrix equal to $M_{\bm{n}}$ defined above. Similarly to \eqref{eq:HeineII}, one gets 
    \begin{equation}\label{eq:HeineII star}
        \phi_{\bm{n}}^\sharp(z) = \frac{1}{\det \overline{M_{\bm{n}}}}
        \det
        \left(\begin{array}{ccc}
            &\overline{\tilde{T}^{(1)}_{n_1,|\bm{n}|}} \\[0.4em]
        \hdashline[0.5pt/2pt]
        \\[-1.2em]
        & \vdots \\[0.2em]
        \hdashline[0.5pt/2pt]
        \\[-1.0em]
            &\overline{\tilde{T}^{(r)}_{n_r,|\bm{n}|}} 
             \\[0.4em]
        \hdashline[0.5pt/2pt]
        \\[-1.0em]
            z^{\abs{\bm{n}}/2} & \cdots & z^{-\abs{\bm{n}}/2}
        \end{array}\right).
    \end{equation}

    We also want to introduce type I orthogonality. Here we use vectors of functions $\bm{\xi}_{\bm{n}} = (\xi_{\bm{n},1},\dots,\xi_{\bm{n},1})$, defined through the following proposition. 

    \begin{prop}\label{prop:typeI}
    A multi-index $\bm{n} \in \N^r$ is normal if and only if there exists a unique vector of functions $\bm{\xi}_{\bm{n}} = (\xi_{\bm{n},1},\dots,\xi_{\bm{n},r})$, with $\xi_{\bm{n},j}$ of the form
    \begin{equation}\label{eq:type I coefficients}
       \xi_{\bm{n},j}(z) = \lambda_{-n_j/2,j}z^{-n_j/2} +\lambda_{-n_j/2+1,j}z^{-n_j/2+1} + \dots + \lambda_{n_j/2-1,j}z^{n_j/2-1},
    \end{equation}
    satisfying the orthogonality relations 
    \begin{equation}\label{eq:type I def SWAPPED}
        \sum_{j = 1}^r \int \xi_{\bm{n},j}(z)z^{-k}d\mu_j(z) = 
        \begin{cases}
            0, & k = -\tfrac{\abs{\bm{n}}}{2}+1,\dots,\tfrac{\abs{\bm{n}}}{2}-1, \\
            1, & k = -\tfrac{\abs{\bm{n}}}{2}.
        \end{cases}
    \end{equation}
    
    Similarly, $\bm{n}$ is normal if and only if $\xi_{\bm{n}}^\sharp$ is uniquely determined by requiring 
    \begin{equation}\label{eq:type I star def SWAPPED}
        \xi^\sharp_{\bm{n},j}(z) = \kappa_{n_j/2,j}z^{n_j/2} + \kappa_{n_j/2-1,j}z^{n_j/2-1} \dots + \kappa_{-n_j/2+1,j}z^{-n_j/2+1},
    \end{equation}
    and the orthogonality relations
    \begin{equation}\label{eq:type I star def SWAPPED}
        \sum_{j = 1}^r \int \xi^\sharp_{\bm{n},j}(z)z^{-k}d\mu_j(z) = 
        \begin{cases}
            0, & k = -\tfrac{\abs{\bm{n}}}{2}+1,\dots,\tfrac{\abs{\bm{n}}}{2}-1, \\
            1, & k = \tfrac{\abs{\bm{n}}}{2}.
        \end{cases}
    \end{equation}
\end{prop}
\begin{rem}
If $r=1$ then $\phi_{n}(z) = z^{-n/2} \Phi_n(z)$, $\phi_n^\sharp(z) = z^{-n/2} \Phi_n^*(z)$, 
while $\xi_n(z) = {\kappa}_{n-1}^{-1} z^{-n/2} \Phi^*_{n-1}(z)$, $\xi^\sharp_n(z) = \kappa_{n-1}^{-1} z^{-n/2+1} \Phi_{n-1}(z)$,  with $\kappa_{n-1} = \int \Phi_{n-1}(z) z^{-(n-1)}d\mu(z) = ||\Phi_{n-1}||^2$. Here $\Phi_n^*(z) = z^n \overline{\Phi_n(1/\bar{z})}$ as usual in the classical one-measure case.
\end{rem}
\begin{proof}
    Solving for the coefficients of $\xi_{\bm{n},1},\dots,\xi_{\bm{n},r}$, it is easy to verify that \eqref{eq:type I star def SWAPPED} is a linear system of equations with coefficient matrix equal to ${M}_{\bm{n}}^T$ (for an appropriate order of the equations and coefficients). The first statement then follows from Proposition \ref{normality condition}. Similarly, \eqref{eq:type I def SWAPPED} gives a system of equations with the same matrix ${M}_{\bm{n}}^T$, which proves the second statement. 
\end{proof}



Just as for the multiple orthogonal polynomials on the real line, see~\cite{Kui}, we can get the determinantal formula for $\xi_{\bm{n},j}$ and $\xi^\sharp_{\bm{n},j}$.   

\section{Two Determinantal Identities}\label{ss:andreief}
In our proof of normality for Angelesco and AT systems in Sections~\ref{ss:Angelesco} and \ref{ss:AT}, we rely on the following determinantal formula, which can be viewed as a generalization of the Andreief (or continuous Cauchy--Binet) identity. Its proof follows similar lines to that of the standard Andreief identity. The block form of the formula is particularly well suited for the multiple orthogonality setting, whether on the unit circle or the real line, as further demonstrated in~\cite{KNik,KVNikInt}. See also~\cite{CouVA,Kui} for the related normality proofs on the real line. 
\begin{prop}
    Let $f_j,g_j \in L^2(\mu)$ for some probability measure on a measure space $(X,\Sigma,\mu)$.  Then for any $N\ge M\ge 1$  and any $(N-M)\times N$ matrix $\bm{A}$:
    \begin{multline}\label{eq:Andreief}
        \det\left(
        \begin{array}{@{}c@{}}
        \begin{pmatrix}
            \int_X f_j(x) g_k(x)\,d\mu(x)
        \end{pmatrix}_{1\le j \le M, 1\le k\le N} 
        \\[0.4em]
        \hdashline[0.5pt/2pt]
        \\[-1.0em]
        \bm{A}
        \end{array}
        \right) =
        \\
        \tfrac{1}{M!}
         \int_{X^M}
        \det\left(
        \begin{array}{@{}c@{}}
        \begin{pmatrix}
            g_k(x_j)
        \end{pmatrix}_{1\le j \le M, 1\le k\le N} 
        \\[0.4em]
        \hdashline[0.5pt/2pt]
        \\[-1.0em]
        \bm{A}
        \end{array}
        \right)
        \det\left(f_l(x_j)\right)_{1\le l,j \le M} \,d^M\mu(\bm{x}),
    \end{multline}
    where $d^M\mu(\bm{x}):=d\mu(x_1)\ldots d\mu(x_M)$. The matrix on the left-hand side refers to the matrix whose upper
    $M\times N$ block is $\begin{pmatrix}
            \int_X f_j(x) g_k(x)\,d\mu(x)
        \end{pmatrix}_{1\le j \le M, 1\le k\le N}$ and whose lower $(N-M)\times N$ block is $\bm{A}$.
\end{prop}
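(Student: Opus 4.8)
The plan is to follow the classical proof of the Andreief (Gram--Heine) identity while carefully tracking the fixed block $\bm{A}$. The starting point is that each entry of the upper $M\times N$ block is itself an integral and that the determinant is multilinear in its rows. Since only the first $M$ rows carry the integrals while the rows of $\bm{A}$ are constant, I would introduce one dummy variable $x_j$ per integrated row and pull all $M$ integrals outside the determinant:
\begin{equation*}
\det\left(\begin{array}{@{}c@{}}\left(\int_X f_j g_k\,d\mu\right)_{1\le j\le M,\,1\le k\le N}\\ \hline \bm{A}\end{array}\right)
=\int_{X^M}\det\left(\begin{array}{@{}c@{}}\left(f_j(x_j)g_k(x_j)\right)_{1\le j\le M,\,1\le k\le N}\\ \hline \bm{A}\end{array}\right)\,d^M\mu(\bm{x}).
\end{equation*}
The interchange of integration and determinant is justified by Fubini together with the bound $f_jg_k\in L^1(\mu)$ coming from Cauchy--Schwarz and the hypothesis $f_j,g_k\in L^2(\mu)$.

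Next, observe that in the integrand the $j$-th row ($1\le j\le M$) carries the common factor $f_j(x_j)$, independent of the column index $k$, whereas the rows of $\bm{A}$ carry no such factor. Factoring $f_j(x_j)$ out of each of the first $M$ rows yields
\begin{equation*}
\int_{X^M}\Big(\prod_{j=1}^M f_j(x_j)\Big)\,\det\left(\begin{array}{@{}c@{}}\left(g_k(x_j)\right)_{1\le j\le M,\,1\le k\le N}\\ \hline \bm{A}\end{array}\right)\,d^M\mu(\bm{x}).
\end{equation*}
Call this quantity $J$; it is already ``half'' of the claimed right-hand side, and the remaining task is to symmetrize the factor $\prod_j f_j(x_j)$ into the determinant $\det(f_l(x_j))_{1\le l,j\le M}$.

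For the symmetrization I would expand $\det(f_l(x_j))_{1\le l,j\le M}=\sum_{\sigma\in S_M}\sgn(\sigma)\prod_{l=1}^M f_l(x_{\sigma(l)})$ and substitute it into the right-hand side. For each fixed $\sigma$, relabeling the integration variables via $x_j\mapsto x_{\sigma^{-1}(j)}$---legitimate because the product measure $d^M\mu$ is symmetric---replaces $\prod_l f_l(x_{\sigma(l)})$ by $\prod_l f_l(x_l)$ and permutes only the first $M$ rows of the $g$-block (the rows of $\bm{A}$ being constant in $\bm{x}$), contributing the factor $\sgn(\sigma)$. Hence each of the $M!$ terms equals $\sgn(\sigma)^2J=J$, the sum over $S_M$ cancels the prefactor $1/M!$, and we are left with exactly $J$. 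Comparison with the first display then completes the proof.

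The computation is essentially bookkeeping, so the only point requiring genuine care---and the main (minor) obstacle---is verifying that the presence of $\bm{A}$ does not disturb any of the three manipulations. This is precisely why the argument survives the generalization: the rows of $\bm{A}$ neither depend on the integration variables (so they are untouched by the multilinearity of the first step and by the relabeling of the last step) nor admit a common row-factor (so they are untouched by the extraction of $\prod_j f_j(x_j)$), and the relabeling permutes only the top $M$ rows, giving the clean sign $\sgn(\sigma)$. The case $M=N$, in which $\bm{A}$ is absent, recovers the standard Andreief identity and confirms the normalization $1/M!$.
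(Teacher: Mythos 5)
Your proof is correct, but it takes a different route from the paper's. The paper works entirely on the expanded level: it applies the Leibniz formula to both determinants on the right-hand side, producing a double sum over $\sigma\in S_N$ and $\tau\in S_M$, factors the $M$-fold integral into a product of single integrals, and then re-indexes using the subgroup $S_{N|M}$ of permutations fixing $M+1,\dots,N$ to show that each permutation $\tilde\sigma\in S_N$ occurs exactly $M!$ times, matching the Leibniz expansion of the left-hand side. You instead never expand the big $N\times N$ determinant: you use multilinearity of the determinant in its first $M$ rows to pull the integrals inside (reducing the left side to $J=\int_{X^M}\prod_{j=1}^M f_j(x_j)\,\det\bigl((g_k(x_j)),\bm{A}\bigr)\,d^M\mu$ after factoring out the row factors $f_j(x_j)$), and then symmetrize the right side by expanding only $\det(f_l(x_j))$ and relabeling the integration variables, using the symmetry of the product measure so that each of the $M!$ terms contributes $\sgn(\sigma)^2 J=J$. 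The two arguments encode the same permutation bookkeeping, but yours trades the paper's coset-counting in $S_N$ for a change-of-variables argument, which arguably isolates more transparently why the constant block $\bm{A}$ is harmless: it is unaffected by row multilinearity, carries no common row factor, and is invariant under the relabeling, which permutes only the top $M$ rows and hence yields exactly the sign $\sgn(\sigma)$. Your Fubini justification via Cauchy--Schwarz is also adequate (the paper leaves integrability implicit), so the proposal stands as a complete and slightly more conceptual alternative to the paper's computation.
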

\begin{proof}
    Expand the right hand side of \eqref{eq:Andreief} using the Leibniz formula to get 
    \begin{equation}\label{eq:andreief proof step 1}
        \sum_{\sigma \in S_N, \tau \in S_M} \sgn{(\sigma)} \sgn{(\tau)} \prod_{j = M+1}^N A_{j,\sigma(j)} \int \prod_{j = 1}^M g_{\sigma(j)}(x_j)f_j(x_{\tau(j)}) d^M\mu(\bm{x}).
    \end{equation}
    After some rearrangement of the factors in the above integral we can write 
    \begin{equation}\label{eq:andreief proof step 2}
        \sum_{\sigma \in S_N, \tau \in S_M} \sgn{(\sigma)} \sgn{(\tau)} \prod_{j = M+1}^N A_{j,\sigma(j)} \prod_{j = 1}^M \int g_{\sigma(j)}(x)f_{\tau^{-1}(j)}(x) d\mu(x).
    \end{equation}
    Write $S_{N|M}$ for the set of permutations in $S_N$ that fix $M+1,\dots,N$. Associate each $\tau \in S_M$ with $\tilde{\tau} \in S_{N|M}$, so that we can rewrite \eqref{eq:andreief proof step 2} as
    \begin{equation}\label{eq:andreief proof step 3}
        \sum_{\sigma \in S_N, \tilde{\tau} \in S_{N|M}} \sgn{(\sigma\tilde{\tau})} \prod_{j = M+1}^N A_{j,\sigma\tilde{\tau}(j)} \prod_{j = 1}^M \int g_{\sigma\tilde{\tau}(j)}(x)f_j(x) d\mu(x).
    \end{equation}
    Given any $\tilde{\sigma} \in S_N$, for each $\tilde{\tau} \in S_{N|M}$ there is a unique $\sigma \in S_N$ such that $\tilde{\sigma} = \sigma\tilde{\tau}$. Hence the above sum runs through each $\tilde{\sigma} \in S_M$ exactly $M!$ times, and we end up with
    \begin{equation}\label{eq:andreief proof step 4}
        M!\sum_{\tilde{\sigma} \in S_N} \sgn{(\tilde{\sigma})} \prod_{j = M+1}^N A_{j,\tilde{\sigma}(j)} \prod_{j = 1}^M \int g_{\tilde{\sigma}(j)}(x)f_j(x) d\mu(x),
    \end{equation}
    which is exactly $M!$ times the left hand side of \eqref{eq:Andreief} expanded using the Leibniz formula.
\end{proof}

We will also need the following Vandermonde-type determinant. 

\begin{lem}  
Let
\begin{equation}
    \label{eq:vandEven}
        V_N(\bm{z}):=   \det \begin{pmatrix}
                z_1^{-(N-1)/2} & z_1^{-(N-3)/2} & \cdots &  z_1^{(N-1)/2} \\
                z_2^{-(N-1)/2}  & z_2^{-(N-3)/2} & \cdots & z_2^{(N-1)/2} \\
                \vdots & \vdots &  \ddots & \vdots\\
                z_{N}^{-(N-1)/2}  & z_{N}^{-(N-3)/2} & \cdots & z_{N}^{(N-1)/2}      
            \end{pmatrix},
\end{equation}
where $\bm{z} = (z_1,\dots,z_N)$.

If $z_j = e^{i\theta_j}\in\T$ for all $j = 1,\dots,N$, and 
        $t_0\le \theta_1\le\theta_2\le \ldots\le \theta_N\le t_0+2\pi$, then
        \begin{align}
\label{eq:vandEven2}
         V_N(\bm{z})   & = 
             (2i)^{N(N-1)/2}
\prod_{1 \leq j < k \leq N}\sin\tfrac{\theta_k-\theta_j}{2}
            \\ \label{eq:vandEven3}
            & =
             i^{N(N-1)/2}
\prod_{1 \leq j < k \leq N}|z_k-z_j|.
        \end{align} 
\end{lem}
\begin{proof}
     By extracting $z_j^{-(N-1)/2}$ from the $j$-th row we obtain the Vandermonde determinant. Therefore~\eqref{eq:vandEven} is equal to 
    \[
    \prod_{j=1}^{N} z_j^{-(N-1)/2} \prod_{1 \leq j<k \leq N}  (z_k-z_j), 
    \]
    which is equal to~\eqref{eq:vandEven2} after we employ the elementary identity
    \begin{equation}\label{eq:zSin}
        e^{i\theta_k}-e^{i\theta_j} = 2i e^{i(\theta_j+\theta_k)/2} \sin\tfrac{\theta_k-\theta_j}{2}.
    \end{equation}
    Finally,~\eqref{eq:vandEven3} is clear if one observes that the product of sines in~\eqref{eq:vandEven2} is always non-negative because of the ordering of $\theta_j$'s.
\end{proof}


\section{Angelesco systems on the unit circle}\label{ss:Angelesco}
\begin{defn}\label{def:Angelesco}
    For each $j = 1,\dots,r$, let $\mu_j$ be a probability measure on $\T$ with infinite support. We say that system $\bm{\mu} = (\mu_1,\dots,\mu_r)$ is an Angelesco system on the unit circle if there exist closed arcs $\Gamma_j\subset \T$ such that $\supp\,\mu_j\subseteq \Gamma_j$, $j = 1,\dots,r$, and $|\Gamma_j\cap \Gamma_k|\in\{0,1,2\}$ whenever $j \neq k$.
\end{defn}
\begin{rem}
    In the above definition, it is easy to see that $\Gamma_j\cap \Gamma_k$ can consist of two points only if $r=2$ and the union of $\Gamma_1$ and $\Gamma_2$ is equal to $\T$.
\end{rem}


{We assume that the order of $\Gamma_j$'s is chosen so that $t_0 \le  \theta_1 \le \dots \le \theta_r \le t_0 + 2\pi$ holds whenever $e^{i\theta_1} \in \Gamma_1,\dots,e^{i\theta_r} \in \Gamma_r$ for some fixed $t_0\in\R$. We choose the branch of the square root function to be~\eqref{eq:sqrt} with this  $t_0$. 
Let us additionally require that $\mu_r$ does not have a point mass at $e^{it_0}$ (see, however, Remark~\ref{rem:badAngelesco} below).}




\begin{thm}\label{thm:Angelesco}
    For Angelesco systems on the unit circle, every $\bm{n} \in \N^r$ is normal.
\end{thm}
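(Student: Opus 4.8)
By Proposition~\ref{normality condition}, and since $\bm{0}$ is normal for any system, it suffices to show $\det M_{\bm{n}}\neq 0$ for every $\bm{n}\neq\bm{0}$. Writing $N=\abs{\bm{n}}$, I would first record that, in the symmetric form of~\eqref{eq:H1 odd}, the $(j,p)$-th row of $M_{\bm{n}}$ (block $j$, $p=1,\dots,n_j$) pairs the monomial $f^{(j)}_p(z)=z^{n_j/2-p+1}$ against the columns $g_q(z)=z^{-N/2+q-1}$, $q=1,\dots,N$, via $\int f^{(j)}_p g_q\,d\mu_j$. The plan is to turn $\det M_{\bm{n}}$ into a multiple integral with a signed integrand, mimicking the Angelesco argument on the real line. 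I would apply the generalized Andreief identity~\eqref{eq:Andreief} once per measure, peeling the blocks $H^{(1)},\dots,H^{(r)}$ off the top in turn; at each step the remaining blocks play the role of the inert matrix $\bm{A}$, which is legitimate because they do not involve the variables being integrated out. The outcome is
\[
\det M_{\bm{n}}=\frac{\pm1}{\prod_j n_j!}\int_{\Gamma_1^{n_1}\times\cdots\times\Gamma_r^{n_r}}\det\big(g_q(x_{(j,p)})\big)\,\prod_{j=1}^{r}\det\big(f^{(j)}_p(x_{(j,p')})\big)\,\prod_{j,p}d\mu_j(x_{(j,p)}).
\]

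Next I would evaluate the two kinds of determinants with the Vandermonde Lemma. The Angelesco ordering forces every point of $\Gamma_j$ to precede every point of $\Gamma_{j'}$ in angle when $j<j'$, so sorting all $N$ points by angle respects the block structure; moreover the integrand is symmetric under permuting points within a single block (a within-block transposition flips the sign of both determinantal factors), so it is enough to evaluate on the sorted configuration. There $\det\big(g_q(x_{(j,p)})\big)=\prod_{(j,p)}x_{(j,p)}^{-1/2}\,V_N$ differs from the Vandermonde $V_N$ of the Lemma only by the monomial $\prod x_{(j,p)}^{-1/2}$ (the $g$-powers being the symmetric ones shifted by $-\tfrac12$), while each $\det\big(f^{(j)}_p(x_{(j,p')})\big)=\pm\prod x^{1/2}\,V_{n_j}$ over the points of $\Gamma_j$ (shifted by $+\tfrac12$).

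Substituting the absolute-value form~\eqref{eq:vandEven3}, the monomial prefactors cancel at once, $\prod x^{-1/2}\cdot\prod x^{1/2}=1$, leaving a fixed power of $i$, a fixed sign, and the product $\prod_{\text{all pairs}}|z_k-z_l|\cdot\prod_{\text{within-block pairs}}|z_k-z_l|$; that is, cross-block pairs enter to the first power and within-block pairs squared, so the whole integrand is a single nonzero constant times a nonnegative real function. To upgrade nonnegativity to strict positivity of the integral I would use that each $\mu_j$ has infinite support and that distinct arcs meet in at most finitely many points (with $\mu_r$ carrying no mass at $e^{it_0}$, so the angular order is unambiguous at the seam): one may then choose $n_j$ distinct points in $\supp\mu_j$ for each $j$, distinct across blocks, on a product of small neighborhoods of which the integrand is bounded below by a positive constant. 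Hence the integral is strictly positive and $\det M_{\bm{n}}\neq0$.

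I expect the delicate part to be the second step: arranging the multi-measure Andreief iteration so that the leftover blocks are genuinely inert, keeping track of the permutation signs, and confirming that the monomial phases cancel globally. This cancellation is not a coincidence — it is exactly engineered by the symmetric half-integer powers of the Laurent normalization in~\eqref{eq:monic type II mod}, which is what makes the $-\tfrac12$ shift in $g$ and the $+\tfrac12$ shift in each $f^{(j)}$ offset each other; the same computation with the ordinary polynomials $\Phi_{\bm{n}}$ would not enjoy this clean cancellation, which is precisely why $\phi$-normality holds so generally.
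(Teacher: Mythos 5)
Your proof is correct and follows essentially the same route as the paper: reduction to $\det M_{\bm{n}}\neq 0$, the blockwise iterated Andreief identity, the trigonometric Vandermonde lemma, symmetrization over within-block permutations, and strict positivity from the infinite supports together with the seam convention for $\mu_r$ at $e^{it_0}$. The only cosmetic difference is that you cancel the $\pm\tfrac{1}{2}$ monomial prefactors by hand, whereas the paper avoids them from the start by rewriting $H^{(j)}_{n_j,\abs{\bm{n}}}$ in the symmetric half-integer form~\eqref{eq:H1 odd} before applying the lemma.
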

\begin{proof}
We show that $\det M_{\bm{n}} \ne 0$ by using the generalized Andreief identity~\eqref{eq:Andreief}, applied $r$ times to each of the $T^{(l)}_{n_l,|\bm{n}|}$, $l=1,\ldots,r$, sequentially (with $N=|\bm{n}|$, $M=n_l$, with $f_j(z)=z^{(n_l+1)/2-j}$, $g_k(z)=z^{-(|\bm{n}|+1)/2+k}$, see~\eqref{eq:H1 odd}) to obtain

\begin{equation}\label{eq:angelesco determinant}
    \det M_{\bm{n}} = \frac{(-1)^{k_{\bm{n}}}}{n_1!\ldots n_r!}\int_{\Gamma_1^{n_1}}\ldots  \int_{\Gamma_r^{n_r}}
    V_{|\bm{n}|}(\bm{z}_1,\ldots,\bm{z}_r)
    \prod_{j=1}^r V_{n_j}(\bm{z}_j)
    \prod_{j=1}^r d^{n_j} \mu_j(\bm{z}_j),
\end{equation}
for an integer $k_{\bm{n}}$, where $V_{\abs{\bm{n}}}$ and $V_{n_1},\dots,V_{n_r}$ are given by \eqref{eq:vandEven}. The factor $(-1)^{k_{\bm{n}}}$ enters because the powers in $V_{n_1},\dots,V_{n_r}$ are in the reverse order when we apply the Andreief identity. Hence we have 
\begin{equation}\label{eq:kn}
    k_{\bm{n}} = \sum_{j = 1}^r n_j(n_j-1)/2.
\end{equation}

For some choice of permutations $\sigma_1 \in S_{n_1},\dots, \sigma_r \in S_{n_r}$, assume that $t_0 \le \theta_{j,\sigma(1)} < \dots < \theta_{j,\sigma(n_j)} < t_0 + 2\pi$, where $\bm{z}_j = (e^{i\theta_{j,1}},\dots,e^{i\theta_{j,n_j}})$, and write $\bm{z}_{j,\sigma_j}$ for $(z_{j,\sigma_j(1)},\dots,z_{j,\sigma_j(n_j)})$, $j = 1,\dots,r$. Then we have 
\begin{align}\label{eq:big vandermonde permutation}
    & V_{\abs{\bm{n}}}(\bm{z}_1,\dots,\bm{z}_r) = V_{\abs{\bm{n}}}(\bm{z}_{1,\sigma_1},\dots,\bm{z}_{r,\sigma_r})\prod_{j = 1}^r\sgn(\sigma_j), \\ \label{eq:small vandermonde permutation}
    & V_{n_j}(\bm{z}_j) = V_{n_j}(\bm{z}_{j,\sigma_j})\sgn(\sigma_j), \qquad j = 1,\dots,r.
\end{align}
Since we are in an Angelesco system, the elements in $(\bm{z}_{1,\sigma_1},\dots,\bm{z}_{r,\sigma_r})$ are now ordered by size of angle in $[t_0,t_0+2\pi)$, so that \eqref{eq:vandEven3} can be applied. 

Consider the regions 
\begin{multline}
    \Gamma_{j,\sigma_j} = \set{(e^{i\theta_{j,\sigma(1)}},\dots,e^{i\theta_{j,\sigma(n_j)}}) \in \Gamma_j^{n_j}: \theta_{j,\sigma(1)} < \dots < \theta_{j,\sigma(n_j)}}, \\ j = 1,\dots,r. 
\end{multline}
By \eqref{eq:vandEven3} and \eqref{eq:big vandermonde permutation}-\eqref{eq:small vandermonde permutation}, the integral in \eqref{eq:angelesco determinant} over the region $\prod_{j=1}^r\Gamma_{j,\sigma}^{n_j}$ turns into
\begin{equation}\label{eq:int}
    i^{l_{\bm{n}}} \int_{\Gamma_{1,\sigma_1}}\ldots  \int_{\Gamma_{r,\sigma_r}}
    \prod_{j<k} |\Delta(\bm{z}_{k,\sigma_k};\bm{z}_{j,\sigma_j})|
    \prod_{j=1}^r |\Delta(\bm{z}_{j,\sigma_j})|^2
    \prod_{j=1}^r d^{n_j} \mu_j(\bm{z}_j),
\end{equation}
where $\Delta(\bm{z}) := \prod_{j<k} (z_k-z_j)$ and $\Delta(\bm{z};\bm{y}):=\prod_{j,k}(z_k-y_j)$, and 
\begin{equation}\label{eq:ln}
    l_{\bm{n}} = \abs{\bm{n}}(\abs{\bm{n}}-1)/2 + \sum_{j = 1}^r n_j(n_j-1)/2.
\end{equation}
Hence we now have a {non-negative} integrand. Because each $\mu_j$ is supported on an infinite number of points, the integral in~\eqref{eq:int} is, in fact, positive.
Finally, the summation over all different permutations shows that $\det M_{\bm{n}}\ne 0$, i.e, $\bm{n}$ is normal. 
\end{proof}

\begin{rem}
    Note that the integral formula \eqref{eq:angelesco determinant} works for any system of measures.
\end{rem}
\begin{rem}\label{rem:badAngelesco}
    {One can adapt the proof above to the excluded case when $\mu_r$ has a pure point at $e^{it_0}$. For this, one needs to choose the square root function in the integrals with respect to  $\mu_r$ to be taken with the complex argument $\operatorname{arg}_{(t_0,t_0+2\pi]}$, while for all the other measures we take $\operatorname{arg}_{[t_0,t_0+2\pi)}$ as before. 
    }
\end{rem}

\section{AT Systems on the Unit Circle}\label{ss:AT}
    Recall that a collection $\set{u_j(t)}_{j=1}^n$ of continuous real-valued functions on a closed interval $[a,b]$ is called a Chebyshev system on $[a,b]$ if the determinants
    \begin{equation}\label{eq:Chebyshev}
        W_n(\bm{x}):=\det
        \begin{pmatrix}
            u_1(x_1) & u_1(x_2) & \cdots & u_1(x_n) \\
            u_2(x_1) & u_2(x_2) & \cdots & u_2(x_n) \\
            \vdots & \vdots & \ddots & \vdots \\
            u_n(x_1) & u_n(x_2) & \cdots & u_n(x_n) \\
        \end{pmatrix} \ne 0
    \end{equation}
    for every choice of different points $x_1,\ldots,x_n\in[a,b]$. In fact, by continuity, the sign of the determinant in~\eqref{eq:Chebyshev} is always positive or always negative, if we additionally introduce the ordering $x_1<x_2<\ldots<x_n$.

    Given a function $f(\theta)$ let us introduce the following notation:
    \begin{equation}
        \operatorname{Trig}_m(f) = \begin{cases}
            \set{f(\theta)\cos{\frac{2k-1}{2}\theta},f(\theta)\sin{\frac{2k-1}{2}\theta}}_{k = 1}^{m/2}, & \mbox{ if } m \mbox{ is even},
            \\
            \set{1}\cup\set{f(\theta)\cos{k\theta},f(\theta)\sin{k\theta}}_{k = 1}^{(m-1)/2}, & \mbox{ if } m \mbox{ is odd}.
        \end{cases}
    \end{equation}

    \begin{defn}\label{def:AT}
        For each $j = 1,\dots,r$, let $d\mu_j(e^{i\theta})=w_j(\theta)d\mu(e^{i\theta})$ be a measure that is absolutely continuous with respect to a probability measure $\mu$, infinitely supported on an arc $\Gamma = \{e^{i\theta}:\alpha\le \theta\le \beta\}$ of the unit circle $\T$ with $0<\beta-\alpha\le 2\pi$. We call $\bm{\mu}$ an AT system on $\Gamma$ for the multi-index $\bm{n} = (n_1,\ldots,n_r)$ if 
        \begin{equation}\label{eq:AT}
            \operatorname{Trig}_{\bm{n}}(\bm{\mu}) = \bigcup_{j=1}^r \operatorname{Trig}_{n_j}(w_j)
        \end{equation}
        is Chebyshev on $[\alpha,\beta]$.
    \end{defn}

    Given an AT system on the unit circle, it is natural to pick the square root function in~\eqref{eq:sqrt} with $t_0 = \alpha$.
    
    \begin{thm}\label{thm:AT}
        Suppose that $\bm{\mu}$ is an AT system for the index $\bm{n}$. Then $\bm{n}$ is normal.
    \end{thm}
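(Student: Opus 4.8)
The plan is to prove normality via Proposition~\ref{normality condition}, i.e. to show $\det M_{\bm{n}}\neq 0$, by turning the determinant into an integral whose integrand is governed by the Chebyshev (AT) property. Choosing $t_0$ so that $[\alpha,\beta]\subseteq[t_0,t_0+2\pi)$ and writing the blocks in the form~\eqref{eq:H1 odd}, I would record each entry of $M_{\bm{n}}$ as $\int_\Gamma g_c(z)f_l(z)\,d\mu$, where the column functions $g_c(z)=z^{(c-1)-(\abs{\bm{n}}-1)/2}$, $c=1,\dots,\abs{\bm{n}}$, are common to all blocks, while the row function coming from the $\rho$-th row of the $j$-th block is $f_l(z)=z^{(n_j-1)/2-(\rho-1)}\,w_j(\theta)$ (here $z=e^{i\theta}$). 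Applying the classical Andreief identity (the case $M=N=\abs{\bm{n}}$ of~\eqref{eq:Andreief}) then yields
\[
\det M_{\bm{n}}=\frac{1}{\abs{\bm{n}}!}\int_{[\alpha,\beta]^{\abs{\bm{n}}}}V_{\abs{\bm{n}}}(\bm{z})\,\det\big(f_l(z_i)\big)_{l,i}\,d\mu(\theta_1)\cdots d\mu(\theta_{\abs{\bm{n}}}),
\]
since the determinant of the column functions is exactly $V_{\abs{\bm{n}}}(\bm{z})$ of~\eqref{eq:vandEven}.

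The crucial step is to identify $\det(f_l(z_i))_{l,i}$ with the Chebyshev determinant of the AT system. For each $j$ the monomials $z^{(n_j-1)/2},\dots,z^{-(n_j-1)/2}$ and the real functions $\cos p\theta,\sin p\theta$ of the matching frequencies (namely $\pm\tfrac12,\dots,\pm\tfrac{n_j-1}{2}$ when $n_j$ is even, and $0,\pm1,\dots,\pm\tfrac{n_j-1}{2}$ when $n_j$ is odd) span the same space and are related by a fixed invertible constant matrix $B_j$ built from $e^{\pm i p\theta}=\cos p\theta\pm i\sin p\theta$. Since the common factor $w_j$ occupies every row of the $j$-th block exactly as it does inside the functions of $\operatorname{Trig}_{n_j}(w_j)$ in Definition~\ref{def:AT}, collecting the $B_j$ block-diagonally gives $\det(f_l(z_i))_{l,i}=(\det B)\,W_{\abs{\bm{n}}}(\bm\theta)$, where $\det B=\prod_j\det B_j\neq 0$ and $W_{\abs{\bm{n}}}$ is the Chebyshev determinant~\eqref{eq:Chebyshev} of the combined system $\operatorname{Trig}_{\bm{n}}(\bm{\mu})$; a reordering of the rows only changes an irrelevant overall sign.

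Finally I would exploit antisymmetry: both $V_{\abs{\bm{n}}}(\bm z)$ and $W_{\abs{\bm{n}}}(\bm\theta)$ change sign under a transposition of two points, so their product is symmetric and, since their common zero set along coincidences carries no weight, the integral reduces to the integral over the ordered region $\alpha\le\theta_1<\dots<\theta_{\abs{\bm{n}}}\le\beta$ (the prefactor $1/\abs{\bm{n}}!$ cancels). There~\eqref{eq:vandEven2} shows $V_{\abs{\bm{n}}}(\bm z)$ equals a fixed nonzero phase times $\prod_{j<k}\sin\tfrac{\theta_k-\theta_j}{2}>0$ (positive because $0<\tfrac{\theta_k-\theta_j}{2}<\pi$ as $\beta-\alpha\le2\pi$), while $W_{\abs{\bm{n}}}(\bm\theta)$ has constant sign by the Chebyshev property noted after~\eqref{eq:Chebyshev}. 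Hence the integrand is a fixed phase times a function of constant sign that is nonzero on the open ordered simplex; as $\mu$ is infinitely supported this simplex has positive measure, so the integral, and therefore $\det M_{\bm{n}}$, is nonzero. I expect the main obstacle to be the bookkeeping in the change-of-basis step—matching the monomial powers of~\eqref{eq:H1 odd} to the precise trigonometric frequencies of $\operatorname{Trig}_{n_j}$ across the even/odd parity cases and verifying $\det B_j\neq 0$—together with checking the branch-of-square-root conventions so that the ordering hypotheses of the Vandermonde lemma are satisfied.
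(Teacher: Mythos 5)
Your proposal is correct and follows essentially the same route as the paper: apply the Andreief identity with $M=N=\abs{\bm{n}}$ to write $\det M_{\bm{n}}$ as $\tfrac{1}{\abs{\bm{n}}!}\int \widetilde{W}_{\abs{\bm{n}}}V_{\abs{\bm{n}}}\,d^{\abs{\bm{n}}}\mu$, convert the monomial-times-weight row determinant into the Chebyshev determinant $W_{\abs{\bm{n}}}$ of $\operatorname{Trig}_{\bm{n}}(\bm{\mu})$ by a constant (block) change of basis, and conclude from the matching antisymmetry of $W_{\abs{\bm{n}}}$ and $i^{-\abs{\bm{n}}(\abs{\bm{n}}-1)/2}V_{\abs{\bm{n}}}$ that the integrand has constant sign, hence the integral is nonzero. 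Your write-up is in fact slightly more explicit than the paper's on the reduction to the ordered simplex and on why the infinite support of $\mu$ gives strict positivity, but these are the same steps the paper performs implicitly.
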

    \begin{proof}
        If we apply the Andreief identity \eqref{eq:Andreief} with $M=N=|\bm{n}|$, then we get
        \begin{equation}
            \det M_{\bm{n}} = \frac{1}{|\bm{n}|!} \int_{\Gamma^{|\bm{n}|}} \widetilde{W}_{\bm{n}}(\bm{z}) V_{|\bm{n}|}(\bm{z}) d^{|\bm{n}|} \mu(\bm{z}),
        \end{equation}
        where $V_{\abs{\bm{n}}}$ is the Vandermonde determinant \eqref{eq:vandEven}, and $\widetilde{W}_{{\bm{n}}}$ is the determinant
        \begin{equation}\label{eq:at determinant}
            \widetilde{W}_{{\bm{n}}}(\bm{z}) = \det
            \left(\begin{array}{ccc}
                w_1(z_1)z_1^{(n_1-1)/2} & \cdots & w_1(z_{\abs{\bm{n}}})z_{\abs{\bm{n}}}^{(n_1-1)/2} \\
                \vdots & \ddots & \vdots \\
                w_1(z_1)z_1^{-(n_1-1)/2} & \cdots & w_1(z_{\abs{\bm{n}}})z_{\abs{\bm{n}}}^{-(n_1-1)/2} 
        \\[0.4em] \hdashline[0.5pt/2pt] \\[-1.2em] &\vdots \\[0.2em] \hdashline[0.5pt/2pt] \\[-1.0em]
                w_r(z_1)z_1^{(n_r-1)/2} & \cdots & w_r(z_{\abs{\bm{n}}})z_{\abs{\bm{n}}}^{(n_r-1)/2} \\
                \vdots & \ddots & \vdots \\
                w_r(z_1)z_1^{-(n_r-1)/2} & \cdots & w_r(z_{\abs{\bm{n}}})z_{\abs{\bm{n}}}^{-(n_r-1)/2} \\
            \end{array}\right).
        \end{equation}
        By performing elementary row operations on $\widetilde{W}_{{\bm{n}}}$, we can reduce this determinant to $W_{\abs{\bm{n}}} (\theta_1,\dots,\theta_{\abs{\bm{n}}})$ given by~\eqref{eq:Chebyshev}  for $\operatorname{Trig}_{\bm{n}}(\bm{\mu})$, up to sign changes and multiplication by an integer power of $2i$.
        
        Now observe that $W_{\abs{\bm{n}}}V_{\abs{\bm{n}}}$ is invariant under permutations of $z_j$'s, while on the subset $\theta_1 < \ldots < \theta_{\abs{\bm{n}}}$ of $[\alpha,\beta]^{\abs{\bm{n}}}$, the product $i^{-\abs{\bm{n}}(\abs{\bm{n}}-1)/2}W_{\abs{\bm{n}}}V_{\abs{\bm{n}}}$ is of constant sign by the AT property. Hence we obtain $\det M_{\bm{n}}\ne 0$, so that $\bm{n}$ is normal. 
    \end{proof}


\section{A two-point Hermite--Padé problem associated with 
\texorpdfstring{$\phi_{\bm{n}}$}{phis} } \label{ss:HPphi}

\subsection{The case of all even \texorpdfstring{$n_j$'s}{n_j's}}\label{ss:HPphiEven}

\noindent

First let us consider those indices $\bm{n}\in \N^r$ that have all the components $n_j$ even, $j=1,\ldots,r$. The general case will be handled in Section~\ref{ss:HPphiFull} below.

For presentation purposes let us introduce $\bm{\ell}=(\ell_1,\ldots,\ell_r)\in\N^r$ by $\bm{n} = 2\bm{\ell}$, so that, in particular, $|\bm{\ell}|=|\bm{n}|/2$ and $\ell_j= n_j/2$.

For such $\bm{n}$, we want to solve the two-point Hermite--Padé problem of finding Laurent polynomials 
\begin{equation}\label{eq:span}
 \varphi_{\bm{n}},\psi_{\bm{n},1},\ldots,\psi_{\bm{n},r} \in\operatorname{span}\big\{z^k\big\}_{k = -|\bm{\ell}|}^{|\bm{\ell}|},   
\end{equation}
so that
\begin{alignat}{3}
\label{eq:two point hermite pade eq 1}
    & \varphi_{\bm{n}}(z)F_j(z) + \psi_{\bm{n},j}(z) = \mathcal{O}(z^{\ell_j}), \qquad && z \rightarrow 0, \\\label{eq:two point hermite pade eq 2}
    & \varphi_{\bm{n}}(z)F_j(z) + \psi_{\bm{n},j}(z) = \mathcal{O}(z^{-\ell_j-1}), \qquad &&z \rightarrow \infty,
\end{alignat}
hold simultaneously for all $j=1,\ldots,r$. 
Here $F_j$ is the Carathéodory function of $\mu_j$,  $j = 1,\ldots,r$, defined by~\eqref{eq:caratheodory function}. 
$F_j$ has the power series expansions 
\begin{alignat}{3}
\label{eq:F0}
    & F_j(z) = 1 + 2\sum_{k=1}^\infty c_{k,j} z^k, \qquad&& \abs{z}  < 1, \\
        \label{eq:Finf}
    & F_j(z) = -1 - 2\sum_{k=1}^{\infty} c_{-k,j} z^{-k}, \qquad&& \abs{z} > 1,
\end{alignat}
where $c_{k,j}$ are the moments
\begin{equation}\label{eq:moments of measures}
    c_{k,j} = \int z^{-k} d\mu_j(z), \qquad k \in \Z.
\end{equation}

\begin{thm}\label{thm:hermite pade orthogonality}
     Let $n_j$ be even for all $j$. {Any} 
    Laurent polynomial $\varphi_{\bm{n}}$ {that solves the Hermite--Pad\'{e} problem~\eqref{eq:span},~\eqref{eq:two point hermite pade eq 1},~\eqref{eq:two point hermite pade eq 2}}, satisfies the orthogonality relations
    \begin{equation}\label{eq:hermite pade orthogonality}
        \int \varphi_{\bm{n}}(z)z^{-k}d\mu_j(z) = 0, \quad k = -\ell_j,-\ell_j+1,\dots,\ell_j-1, \quad j = 1,\dots,r.
    \end{equation}
    Moreover, the Laurent polynomials $\psi_{\bm{n},j}$ can be expressed in terms of $\varphi_{\bm{n}}$ by
    \begin{equation}\label{eq:second kind polynomials}
        \psi_{\bm{n},j}(z) = \int \frac{w+z}{w-z}\big(\varphi_{\bm{n}}(w)-\varphi_{\bm{n}}(z)\big)d\mu_j(w) + \int \varphi_{\bm{n}}(w)d\mu_j(w). 
    \end{equation}

    {Conversely, any $\varphi_{\bm{n}} \in \operatorname{span}\set{z^k}_{k = -|\bm{\ell}|}^{|\bm{\ell}|}$, that satisfies the orthogonality relations~\eqref{eq:hermite pade orthogonality}, solves the Hermite--Pad\'{e} problem ~\eqref{eq:two point hermite pade eq 1}--\eqref{eq:two point hermite pade eq 2} together with~\eqref{eq:second kind polynomials}.}
\end{thm}
\begin{rem}
    Note that the last term $\int \varphi_{\bm{n}}(w)d\mu_j(w)$ in \eqref{eq:second kind polynomials} is zero if $n_j>0$, but not otherwise in general. 
\end{rem}
\begin{proof}
    We fix $j$ and write $\varphi_{\bm{n}}$ and $\psi_{\bm{n},j}$ in the form 
    \begin{align}\label{eq:coefficients of chi}
         \varphi_{\bm{n}}(z) &= \kappa_{|\bm{\ell}|}z^{|\bm{\ell}|} + \kappa_{|\bm{\ell}|-1}z^{|\bm{\ell}|-1}+\dots + \kappa_{-|\bm{\ell}|}z^{-|\bm{\ell}|}, \\ \label{eq:coefficients of y}
        \psi_{\bm{n},j}(z) & = \lambda_{|\bm{\ell}|,j}z^{|\bm{\ell}|} + \lambda_{|\bm{\ell}|-1,j}z^{|\bm{\ell}|-1}+ \dots + \lambda_{-|\bm{\ell}|,j}z^{-|\bm{\ell}|}.
    \end{align}
    We then have 
    \begin{alignat*}{3}
        & \varphi_{\bm{n}}(z)F_j(z) = a_{-|\bm{\ell}|,j}z^{-|\bm{\ell}|} + \dots + a_{\ell_j-1,j}z^{\ell_j-1} + \mathcal{O}(z^{\ell_j}),  \qquad&&z \rightarrow 0, \\
        & \varphi_{\bm{n}}(z)F_j(z) = b_{|\bm{\ell}|,j}z^{|\bm{\ell}|} + \dots + b_{-\ell_j,j}z^{-\ell_j} + \mathcal{O}(z^{-\ell_j-1}),  \qquad&&z \rightarrow \infty,
    \end{alignat*}
    where the coefficients are given by
    \begin{alignat}{3}
    \label{eq:coefficients of series 1}
         & a_{k,j} = \kappa_k + 2\kappa_{k-1}c_{1,j} + \dots + 2\kappa_{-\abs{\bm{\ell}}} c_{\abs{\bm{\ell}}+k,j},  \quad&&k = -|\bm{\ell}|,\dots,\ell_j-1, 
         \\ 
         \label{eq:coefficients of series 2}
         & b_{k,j} = -\kappa_k - 2\kappa_{k+1} c_{-1,j} - \dots - 2 \kappa_{\abs{\bm{\ell}}} c_{-\abs{\bm{\ell}}+k,j}, \quad&&k = -\ell_j,\dots,|\bm{\ell}|.
    \end{alignat}
    To get \eqref{eq:two point hermite pade eq 1}-\eqref{eq:two point hermite pade eq 2} we necessarily need
    \begin{alignat}{3}
        \label{eq:PadeToOrthoEq1}
        & -\lambda_{k,j} = a_{k,j}, \qquad&& k = -|\bm{\ell}|,\dots,-\ell_j-1, \\ 
        \label{eq:PadeToOrthoEq2}
        & -\lambda_{k,j} = a_{k,j}=b_{k,j},  \qquad&& k = -\ell_j,\dots,\ell_j-1, \\ 
        \label{eq:PadeToOrthoEq3}
        & -\lambda_{k,j} = b_{k,j},  \qquad&&k  = \ell_j,\dots,|\bm{\ell}|.
    \end{alignat}
    Observe, in particular, that $\psi_{\bm{n},j}$ is uniquely determined from ~\eqref{eq:PadeToOrthoEq1}--\eqref{eq:PadeToOrthoEq3}, assuming $\varphi_{\bm{n}}$ is given. Furthermore,~\eqref{eq:PadeToOrthoEq2}, combined with~\eqref{eq:coefficients of series 1}--\eqref{eq:coefficients of series 2} and~\eqref{eq:moments of measures}, turns into
    \begin{equation}
         \int \big(\kappa_{|\bm{\ell}|}z^{|\bm{\ell}|} + \dots + \kappa_{-|\bm{\ell}|}z^{-|\bm{\ell}|}\big)z^{-k}d\mu_j(z) = 0, \qquad k = -\ell_j,\dots,\ell_j-1.
    \end{equation}

    This proves \eqref{eq:hermite pade orthogonality}.  Now denote
    \begin{equation}\label{eq:secondKindFunction}
        R_{\bm{n},j}(z) = \int \frac{w+z}{w-z} \varphi_{\bm{n}}(w) d\mu_j(w).
    \end{equation}
    By
    \begin{equation}\label{eq:kernel}
        \frac{w+z}{w-z}
        =
        \begin{cases}
            1+ 2\sum_{k=1}^\infty \frac{z^k}{w^k},\quad  & \mbox{if } |z|<|w|, \\
            -1 - 2\sum_{k=1}^\infty \frac{w^k}{z^k}, \quad & \mbox{if } |z|>|w|, \\
        \end{cases}
    \end{equation}
    and \eqref{eq:hermite pade orthogonality}, we get, as $z\to 0$,
    \begin{equation}\label{eq:R0}
        R_{\bm{n},j}(z)
        =\int \varphi_{\bm{n}}(w)d\mu_j(w) +
        \sum_{k=1}^{\infty} 2z^k \int \varphi_{\bm{n}}(w) w^{-k}d\mu_j(w)   = 
        \mathcal{O}(z^{\ell_j}),
    \end{equation}
    and as $z\to\infty$,
    \begin{multline}\label{eq:RInfty}
        R_{\bm{n},j}(z)
        =-\int \varphi_{\bm{n}}(w)d\mu_j(w) -
        \sum_{k=1}^{\infty} 2z^{-k} \int \varphi_{\bm{n}}(w) w^{k}d\mu_j(w) 
        \\
        = -\int \varphi_{\bm{n}}(w)d\mu_j(w)+\mathcal{O}(z^{-\ell_j-1})
    \end{multline}
    (note that the term with $\int \varphi_{\bm{n}}(w)d\mu_j(w)$ is not needed in~\eqref{eq:R0} as it is always in $\mathcal{O}(z^{\ell_j})$, but it is needed in~\eqref{eq:RInfty} for the cases when $n_j=0$). 

    Now denote $\widetilde{\psi}_{\bm{n},j}(z)$ to be the right-hand side of~\eqref{eq:second kind polynomials}.
    It is elementary to see that $\frac{w+z}{w-z}\big(p(w)-p(z)\big)$ is in $\operatorname{span}\big\{z^k\big\}_{k = -|\bm{\ell}|}^{|\bm{\ell}|}$, provided that $p(z)$ is in $\operatorname{span}\big\{z^k\big\}_{k = -|\bm{\ell}|}^{|\bm{\ell}|}$. This shows that $\widetilde{\psi}_{\bm{n},j}(z)$ is in $\operatorname{span}\big\{z^k\big\}_{k = -|\bm{\ell}|}^{|\bm{\ell}|}$. Now, using ~\eqref{eq:secondKindFunction} we can rewrite the right-hand side of~\eqref{eq:second kind polynomials} as
    \begin{equation}\label{eq:pseVsReven}
        \widetilde{\psi}_{\bm{n},j}(z)
        =
        R_{\bm{n},j}(z) - \varphi_{\bm{n}}(z) F_j(z) + \int \varphi_{\bm{n}}(w)d\mu_j(w).
    \end{equation}
    
    Then~\eqref{eq:R0} and~\eqref{eq:RInfty} imply 
    \begin{alignat}{3}
    \label{eq:two point hermite pade eq tilde}
    & \varphi_{\bm{n}}(z)F_j(z) + \widetilde\psi_{\bm{n},j}(z) = \mathcal{O}(z^{\ell_j}),  \qquad&&z \rightarrow 0, \\\label{eq:two point hermite pade eq 2 tilde}
    & \varphi_{\bm{n}}(z)F_j(z) + \widetilde\psi_{\bm{n},j}(z) = \mathcal{O}(z^{-\ell_j-1}),  \qquad&&z \rightarrow \infty.
\end{alignat}

    In the beginning of the proof we showed that this together with~\eqref{eq:two point hermite pade eq tilde}--\eqref{eq:two point hermite pade eq 2 tilde} necessarily implies that $\widetilde{\psi}_{\bm{n},j}(z)$ must be $\psi_{\bm{n},j}(z)$ that is uniquely determined by~\eqref{eq:PadeToOrthoEq1}--\eqref{eq:PadeToOrthoEq3}. This proves~\eqref{eq:second kind polynomials}.
    
    For the final statement, given $\varphi_{\bm{n}}$ and the corresponding $\psi_{\bm{n},j}$ as in~\eqref{eq:second kind polynomials}, define $R_{\bm{n},j}$ as in~\eqref{eq:secondKindFunction}. As above, we show that it satisfies~\eqref{eq:R0} and ~\eqref{eq:RInfty}, which then becomes ~\eqref{eq:two point hermite pade eq 1}--\eqref{eq:two point hermite pade eq 2}.
\end{proof}

    

 As as result, $\varphi_{\bm{n}}$ naturally relates to $\phi_{\bm{n}}$ from the previous sections. 
 
\begin{cor}\label{cor:HPvsMLOPUC}
    {
    Assume 
    all $n_j$'s are even for  $j=1,\ldots,r$. Then the Hermite--Pad\'{e} problem \eqref{eq:span}, \eqref{eq:two point hermite pade eq 1}, \eqref{eq:two point hermite pade eq 2} 
    has a unique solution $\varphi_{\bm{n}}$ with $z^{\abs{\bm{n}}}$-coefficient equal to $1$ if and only if  $\bm{n}$ is $\phi$-normal (see Definition~\ref{def:normal B}) with respect to $\bm\mu$. It is given by $\varphi_{\bm{n}} = \phi_{\bm{n}}$.}
\end{cor}
\begin{rem}
    For the case of one measure $r=1$, we get that $\varphi_{\bm{n}} = \phi_{\bm{n}}$ is, of course, $z^{-n/2} \Phi_{n}(z)$ and Theorem~\ref{thm:hermite pade orthogonality} reduces to the usual two-point Padé approximation \cite{JNT}. Our version of the result particularly similar to that of Peherstorfer and Steinbauer~\cite{PehSte}. 
    It is natural to call $\psi_{\bm{n},j}$ the second kind (Laurent) multiple orthogonal polynomials, as for the $r=1$ case, see, e.g.,~\cite{OPUC1}.
\end{rem}

By taking reversal in \eqref{eq:two point hermite pade eq 1}-\eqref{eq:two point hermite pade eq 2} and using $F_j^\sharp(z) = -F_j(z)$ we obtain that the solution of the Hermite-Padé problem 
\begin{equation}\label{eq:span2}
 \varphi^\sharp_{\bm{n}},\psi^\sharp_{\bm{n},1},\ldots,\psi^\sharp_{\bm{n},r} \in\operatorname{span}\big\{z^k\big\}_{k = -|\bm{\ell}|}^{|\bm{\ell}|},   
\end{equation}
with
\begin{alignat}{3}
\label{eq:two point hermite pade eq 1 star}
    & \varphi_{\bm{n}}^\sharp(z)F_j(z) - \psi_{\bm{n},j}^\sharp(z) = \mathcal{O}(z^{\ell_j+1}),  \qquad&&z \rightarrow 0, \\ \label{eq:two point hermite pade eq 2 star}
    & \varphi_{\bm{n}}^\sharp(z)F_j(z) - \psi_{\bm{n},j}^\sharp(z) = \mathcal{O}(z^{-\ell_j}),  \qquad&&z \rightarrow \infty,
\end{alignat}
is solved by $\varphi^\sharp_{\bm{n}}(z) = \overline{\varphi_{\bm{n}}(1/\bar{z})}$, and (see  \eqref{eq:second kind polynomials}) $\psi_{\bm{n},j}^\sharp$ is given explicitly by
\begin{equation}
    \psi_{\bm{n},j}^\sharp(z) =  \int \frac{w+z}{w-z}\big(\varphi_{\bm{n}}^\sharp(z)-\varphi_{\bm{n}}^\sharp(w) \big)d\mu_j(w) +\int \varphi_{\bm{n}}^\sharp(w)d\mu_j(w). 
\end{equation}



\subsection{The case of all odd \texorpdfstring{$n_j$'s}{n_j's}, and odd 
\texorpdfstring{$|\bm{n}|$}{$|n|$}}\label{ss:HPphiOdd1}

\noindent

Now let us assume that all $n_j$'s are odd, $j=1,\ldots,r$, and $|\bm{n}|$ is also odd (i.e., $r$ is odd).
Let again $\bm{\ell} := \bm{n}/2$, so that $\ell_j = n_j/2$. Since each $n_j$ is odd, we get that $\ell_j \in \N+\tfrac12:= \{l+\tfrac12:l\in\N\}$ and $|\bm{\ell}|\in \N+\tfrac12$. 
With any choice of a branch of the square root function, we can still consider the Hermite--Pad\'{e} problem ~\eqref{eq:span},~\eqref{eq:two point hermite pade eq 1},~\eqref{eq:two point hermite pade eq 2}. The notation $\operatorname{span}\{z^k\}_{k = -|\bm{\ell}|}^{|\bm{\ell}|}$ in~\eqref{eq:span} stands for $\{z^{-|\bm{\ell}|},z^{-|\bm{\ell}|+1},\ldots,z^{|\bm{\ell}|}\}$, of course. Note that half-powers on the left-hand and right-hand sides of ~\eqref{eq:two point hermite pade eq 1},~\eqref{eq:two point hermite pade eq 2} can be canceled, if one wishes to make sense of the problem analytically rather than algebraically. 

The following analogue of Theorem~\ref{thm:hermite pade orthogonality} holds true.

\begin{thm}\label{thm:hermite pade orthogonalityOdd1}
     Assume that each $n_j$ is odd for $j=1,\ldots,r$, and $|\bm{n}|$ is odd. {Any} 
    $\varphi_{\bm{n}}$ that solves the Hermite--Pad\'{e} problem~\eqref{eq:span},~\eqref{eq:two point hermite pade eq 1},~\eqref{eq:two point hermite pade eq 2}, satisfies the orthogonality relations~\eqref{eq:hermite pade orthogonality}.
    Moreover, 
    $\psi_{\bm{n},j}$ can be expressed in terms of $\varphi_{\bm{n}}$ by
    \begin{equation}\label{eq:second kind polynomialsOdd1}
        \psi_{\bm{n},j}(z) = z^{-1/2} \int  \frac{w+z}{w-z} \big(w^{1/2}\varphi_{\bm{n}}(w)-z^{1/2}\varphi_{\bm{n}}(z)\big)d\mu_j(w). 
    \end{equation}

    Conversely, any $\varphi_{\bm{n}} \in \operatorname{span}\set{z^k}_{k = -|\bm{\ell}|}^{|\bm{\ell}|}$, that satisfies the orthogonality relations~\eqref{eq:hermite pade orthogonality}, solves the Hermite--Pad\'{e} problem ~\eqref{eq:two point hermite pade eq 1}--\eqref{eq:two point hermite pade eq 2} together with~\eqref{eq:second kind polynomialsOdd1}.
\end{thm}
\begin{rem}
    Easy to see that Corollary~\ref{cor:HPvsMLOPUC} holds true for the current setting as well.
\end{rem}
\begin{proof}
    The proof of~\eqref{eq:hermite pade orthogonality} works as in Theorem~\ref{thm:hermite pade orthogonality} with no modifications. Let us prove~\eqref{eq:second kind polynomialsOdd1} now.
    
    We replace~\eqref{eq:secondKindFunction} with
    \begin{equation}\label{eq:secondKindFunctionOdd}
        \hat{R}_{\bm{n},j}(z) = \int \frac{w+z}{w-z} w^{1/2}\varphi_{\bm{n}}(w) d\mu_j(w).
    \end{equation}
    Using \eqref{eq:hermite pade orthogonality} and~\eqref{eq:kernel}, we get
        \begin{equation}\label{eqRodd1}
        \hat{R}_{\bm{n},j}(z)
        =
        \begin{cases}
            2\sum_{k=1}^\infty z^k \int \varphi_{\bm{n}}(w) w^{-(k-1/2)} d\mu_j(w) = \mathcal{O}(z^{\ell_j+1/2}), &  z\to 0, \\
            - 2\sum_{k=1}^\infty z^{-k} \int \varphi_{\bm{n}}(w) w^{k+1/2} d\mu_j(w) = \mathcal{O}(z^{-\ell_j-1/2}), &  z\to \infty,
        \end{cases}
    \end{equation}
    where we used that $n_j\ge 1$, so that $\ell_j\ge 1/2$.

    Now denote $\widetilde{\psi}_{\bm{n},j}(z)$ to be the right-hand side of~\eqref{eq:second kind polynomialsOdd1}.   Since $z^{1/2}\varphi_{\bm{n}}(z)$ is a Laurent polynomial in $\operatorname{span}\big\{z^k\big\}_{k = -|\bm{\ell}|+1/2}^{|\bm{\ell}|+1/2}$, we obtain that $\widetilde{\psi}_{\bm{n},j}(z)$ is in $\operatorname{span}\big\{z^k\big\}_{k = -|\bm{\ell}|}^{|\bm{\ell}|}$. We rewrite the right-hand side of~\eqref{eq:second kind polynomialsOdd1} as
    \begin{equation}\label{eq:tildepsiOdd1}
        \widetilde{\psi}_{\bm{n},j}(z) = z^{-1/2} \hat{R}_{\bm{n},j}(z)  -  \varphi_{\bm{n}}(z) F_j(z). 
    \end{equation}
    Combining ~\eqref{eq:tildepsiOdd1} with~\eqref{eqRodd1} we obtain that $\varphi_{\bm{n}}$ and $\widetilde{\psi}_{\bm{n},j}$ satisfy~\eqref{eq:two point hermite pade eq tilde}--\eqref{eq:two point hermite pade eq 2 tilde}. Now the rest of the proof of Theorem~\ref{thm:hermite pade orthogonality} goes through without changes. 
\end{proof}

\subsection{The case of all odd \texorpdfstring{$n_j$'s}{n_j's}, and even 
\texorpdfstring{$|\bm{n}|$}{$|n|$}}\label{ss:HPphiOdd2}

\noindent

In this section, we assume that all $n_j$'s are odd, $j=1,\ldots,r$, but $|\bm{n}|$ is even (i.e., $r$ is even). Equivalenty, each $|\bm{n}|-n_j$ is odd (note that in Sections~\ref{ss:HPphiEven} and~\ref{ss:HPphiOdd1} each $|\bm{n}|-n_j$ was even). 
As usual $\bm{\ell} := \bm{n}/2$, so that $\ell_j = n_j/2$. Since each $n_j$ is odd, we get that $\ell_j \in \N+\tfrac12$, but now $|\bm{\ell}| = |\bm{n}|/2\in \N$.

With a chosen branch of the square root function $z^{1/2}$, let
\begin{equation}\label{eq:G}
    G_j(z) =  \int  \frac{w^{1/2}z^{1/2}}{w-z} d\mu_j(w)
\end{equation}
for each $j=1,\ldots,r$. These functions can be expanded into
\begin{alignat}{3}
    \label{eq:G0}
     & G_j(z) = \sum_{k=1/2}^\infty c_{k,j} z^{k},\qquad&&  \abs{z} < 1, \\
        \label{eq:Ginf}
    & G_j(z) = -\sum_{k=1/2}^{\infty} c_{-k,j} z^{-k}, \qquad&& \abs{z} > 1,
\end{alignat}
where $c_{k,j}$ are given by~\eqref{eq:moments of measures} but now
with $k$ in $\Z+\tfrac12:=\{l+\tfrac12:l\in\Z\}$.

We consider the following Hermite--Pad\'{e} type problem: find functions of the form
     \begin{align}
     \label{eq:phisOdd}
     \varphi_{\bm{n}} & \in \operatorname{span}\big\{z^k\big\}_{k = -|\bm{\ell}|}^{|\bm{\ell}|}, 
     \\
     \label{eq:psisOdd}
     \psi_{\bm{n},j} & \in \operatorname{span}\big\{ z^k\big\}_{k = -|\bm{\ell}|+1/2}^{|\bm{\ell}|-1/2}, 
     \end{align}
 that solve 
    \begin{alignat}{3}
    \label{eq:two point hermite pade eq 3+}
    & \varphi_{\bm{n}}(z)G_j(z) + \psi_{\bm{n},j}(z) = \mathcal{O}(z^{\ell_j}), \qquad&&z \rightarrow 0, 
    \\
    \label{eq:two point hermite pade eq 4+}
    & \varphi_{\bm{n}}(z)G_j(z) + \psi_{\bm{n},j}(z) = \mathcal{O}(z^{-\ell_j-1}), \qquad&&z \rightarrow \infty. 
\end{alignat}

\begin{thm}\label{thm:hermite pade orthogonalityOdd2}
     Assume that each $n_j$ is odd for $j=1,\ldots,r$, and $|\bm{n}|$ is even. {Any} 
    Laurent polynomial 
    $\varphi_{\bm{n}}$ that solves the Hermite--Pad\'{e} problem~\eqref{eq:phisOdd},~\eqref{eq:psisOdd}, \eqref{eq:two point hermite pade eq 3+},~\eqref{eq:two point hermite pade eq 4+}, satisfies the orthogonality relations~\eqref{eq:hermite pade orthogonality}.
    Moreover, $\psi_{\bm{n},j}$ can be expressed in terms of $\varphi_{\bm{n}}$ by
    \begin{equation}\label{eq:second kind polynomialsOdd2}
        \psi_{\bm{n},j}(z) = \int  \frac{w^{1/2}z^{1/2}}{w-z} \big(\varphi_{\bm{n}}(w)-\varphi_{\bm{n}}(z)\big)d\mu_j(w). 
    \end{equation}

    Conversely, any $\varphi_{\bm{n}} \in \operatorname{span}\set{z^k}_{k = -|\bm{\ell}|}^{|\bm{\ell}|}$, that satisfies the orthogonality relations~\eqref{eq:hermite pade orthogonality}, solves the Hermite--Pad\'{e} problem ~\eqref{eq:phisOdd},~\eqref{eq:psisOdd},~\eqref{eq:two point hermite pade eq 3+},~\eqref{eq:two point hermite pade eq 4+} together with~\eqref{eq:second kind polynomialsOdd2}.
\end{thm}
\begin{rem}
    Again, it is easy to see that Corollary~\ref{cor:HPvsMLOPUC} holds true for the current setting as well.
\end{rem}
\begin{proof}
    The proof follows similar lines to that of Theorem~\ref{thm:hermite pade orthogonality} with some minor modifications related to the half-powers. Write $\varphi_{\bm{n}}$ and $\psi_{\bm{n},j}$ in the form 
    \begin{align}\label{eq:coefficients of chi}
         \varphi_{\bm{n}}(z) &= \kappa_{|\bm{\ell}|}z^{|\bm{\ell}|} + \kappa_{|\bm{\ell}|-1}z^{|\bm{\ell}|-1}+\dots + \kappa_{-|\bm{\ell}|}z^{-|\bm{\ell}|}, \\ \label{eq:coefficients of y}
        \psi_{\bm{n},j}(z) & = \lambda_{|\bm{\ell}|-1/2,j}z^{|\bm{\ell}|-1/2} + \lambda_{|\bm{\ell}|-3/2,j}z^{|\bm{\ell}|-3/2}+ \dots + \lambda_{-|\bm{\ell}|+1/2,j}z^{-|\bm{\ell}|+1/2}.
    \end{align}
    We then have 
    \begin{align*}
        & \varphi_{\bm{n}}(z)G_j(z) = a_{-|\bm{\ell}|+1/2,j}z^{-|\bm{\ell}|+1/2} + \dots + a_{\ell_j-1,j}z^{\ell_j-1} + \mathcal{O}(z^{\ell_j}),  &z& \rightarrow 0, \\
        & \varphi_{\bm{n}}(z)G_j(z) = b_{|\bm{\ell}|-1/2,j}z^{|\bm{\ell}|-1/2} + \dots + b_{-\ell_j,j}z^{-\ell_j} + \mathcal{O}(z^{-\ell_j-1}),  &z& \rightarrow \infty,
    \end{align*}
    where the coefficients are given by
    \begin{align}\label{eq:coefficients of series 1odd}
         & a_{k,j} = \kappa_{k-1/2}c_{1/2,j} + \dots + \kappa_{-\abs{\bm{\ell}}} c_{\abs{\bm{\ell}}+k,j},  &k& = -|\bm{\ell}|+\tfrac12,\dots,\ell_j-1, 
         \\ 
         \label{eq:coefficients of series 2odd}
         & b_{k,j} = - \kappa_{k+1/2} c_{-1/2,j} - \dots - \kappa_{\abs{\bm{\ell}}} c_{-\abs{\bm{\ell}}+k,j}, &k& = -\ell_j,\dots,|\bm{\ell}|-\tfrac12.
    \end{align}
    To get \eqref{eq:two point hermite pade eq 3+}-\eqref{eq:two point hermite pade eq 4+} we necessarily need
    \begin{alignat}{3}
        \label{eq:PadeToOrthoEq1odd}
        & -\lambda_{k,j} = a_{k,j}, \qquad&& k = -|\bm{\ell}|+\tfrac12,\dots,-\ell_j-1, \\ 
        \label{eq:PadeToOrthoEq2odd}
        & -\lambda_{k,j} = a_{k,j}=b_{k,j},  \qquad&& k = -\ell_j,\dots,\ell_j-1, \\ 
        \label{eq:PadeToOrthoEq3odd}
        & -\lambda_{k,j} = b_{k,j},  \qquad&&k  = \ell_j,\dots,|\bm{\ell}|-\tfrac12.
    \end{alignat}
    Then~\eqref{eq:PadeToOrthoEq2odd}, combined with~\eqref{eq:coefficients of series 1odd}--\eqref{eq:coefficients of series 2odd} and~\eqref{eq:moments of measures}, turns into
    \begin{equation}
         \int \big(\kappa_{|\bm{\ell}|}z^{|\bm{\ell}|} + \dots + \kappa_{-|\bm{\ell}|}z^{-|\bm{\ell}|}\big)z^{-k}d\mu_j(z) = 0, \qquad k = -\ell_j,\dots,\ell_j-1.
    \end{equation}

    This proves \eqref{eq:hermite pade orthogonality}.  Now denote
    \begin{equation}\label{eq:secondKindFunctionOdd}
        \widetilde{R}_{\bm{n},j}(z) = \int \frac{w^{1/2}z^{1/2}}{w-z} \varphi_{\bm{n}}(w) d\mu_j(w).
    \end{equation}
    Note that
    \begin{equation}
        \frac{w^{1/2}z^{1/2}}{w-z}
        =
        \begin{cases}
            \sum_{k=1/2}^\infty \frac{z^k}{w^k},\quad  & \mbox{if } |z|<|w|, \\
            - \sum_{k=1/2}^\infty \frac{w^k}{z^k}, \quad & \mbox{if } |z|>|w|,
        \end{cases}
    \end{equation}
    where the summation is over the set $\N+\tfrac12$. Combining this with \eqref{eq:hermite pade orthogonality}, we get
        \begin{equation}\label{eqRodd}
        \widetilde{R}_{\bm{n},j}(z)
        =
        \begin{cases}
            \sum_{k=1/2}^\infty z^k \int \varphi_{\bm{n}}(w) w^{-k} d\mu_j(w) = \mathcal{O}(z^{\ell_j}), & z\to 0, \\
            - \sum_{k=1/2}^\infty z^{-k} \int \varphi_{\bm{n}}(w) w^{k} d\mu_j(w) = \mathcal{O}(z^{-\ell_j-1}), & z\to \infty.
        \end{cases}
    \end{equation}

    Now denote $\widetilde{\psi}_{\bm{n},j}(z)$ to be the right-hand side of~\eqref{eq:second kind polynomialsOdd2}, that is,
    \begin{equation}\label{eq:psiVsR}
        \widetilde{\psi}_{\bm{n},j}(z) = \widetilde{R}_{\bm{n},j}(z)  -  \varphi_{\bm{n}}(z) G_j(z). 
    \end{equation}
    We can rewrite~\eqref{eqRodd} then as 
\begin{alignat}{3}
\label{eq:two point hermite pade eq tildeOdd}
    & \varphi_{\bm{n}}(z)G_j(z) + \widetilde\psi_{\bm{n},j}(z) = \mathcal{O}(z^{\ell_j}),  \qquad&&z \rightarrow 0, \\\label{eq:two point hermite pade eq 2 tildeOdd}
    & \varphi_{\bm{n}}(z)G_j(z) + \widetilde\psi_{\bm{n},j}(z) = \mathcal{O}(z^{-\ell_j-1}),  \qquad&&z \rightarrow \infty.
\end{alignat}

    Finally note that if $\varphi_{\bm{n}}(z)$ is a Laurent polynomial in $\operatorname{span}\big\{z^k\big\}_{k = -|\bm{\ell}|}^{|\bm{\ell}|}$, then $\widetilde{\psi}_{\bm{n},j}(z)$, the right-hand side of~\eqref{eq:second kind polynomialsOdd2}, is in $\operatorname{span}\big\{ z^k\big\}_{k = -|\bm{\ell}|+1/2}^{|\bm{\ell}|-1/2}$. Now the rest of the proof of Theorem~\ref{thm:hermite pade orthogonality} goes through without changes. 
\end{proof}


\subsection{The case of any \texorpdfstring{$\bm{n}\in\N^r$}{n} }
\label{ss:HPphiFull}

\noindent

Let $\bm{n}\in\N^r$ be arbitrary now, and set $\bm\ell=\bm{n}/2$ as usual. 

Let us consider the following hybrid approximation problem. We let $\varphi_{\bm{n}}\in \operatorname{span}\set{z^k}_{k = -|\bm{\ell}|}^{|\bm{\ell}|}$. For those $j=1,\ldots,r$ that have $(|\bm{n}|-n_j)\operatorname{mod} 2 = 0$, we approximate the Carath\'{e}odory function $F_j$ of $\mu_j$ as in~\eqref{eq:two point hermite pade eq 1}--\eqref{eq:two point hermite pade eq 2} with $\psi_{\bm{n},j}\in \operatorname{span}\set{z^k}_{k = -|\bm{\ell}|}^{|\bm{\ell}|}$. For those $j=1,\ldots,r$ that has $(|\bm{n}|-n_j)\operatorname{mod} 2 = 1$, we approximate the $G_j$ function ~\eqref{eq:G} of $\mu_j$ as in~\eqref{eq:two point hermite pade eq 3+}--\eqref{eq:two point hermite pade eq 4+} with $\psi_{\bm{n},j}$ as in~\eqref{eq:psisOdd}. Note that the ``denominator'' function $\varphi_{\bm{n}}$ is common for all $r$ approximations.

\begin{cor}\label{cor:HPvsMLOPUCFull}
    {
    Let $\bm{n}\in\N^r$ be arbitrary. Then this approximation problem 
    has a unique monic solution $\varphi_{\bm{n}}$ if and only if  $\bm{n}$ is $\phi$-normal (see Definition~\ref{def:normal B}) with respect to $\bm\mu$. It is given by $\varphi_{\bm{n}} = \phi_{\bm{n}}$.}
\end{cor}
\begin{proof}
Note that the proofs of Theorems~\ref{thm:hermite pade orthogonality}, \ref{thm:hermite pade orthogonalityOdd1}, and~\ref{thm:hermite pade orthogonalityOdd2} only depended on the parity of $|\bm{n}|$ and of $n_j$ for a chosen $j$ (i.e., it does not depend on the parity of $n_l$ for $l\ne j$).  
\end{proof}
\begin{rem}
By combining this with Theorems~\ref{thm:Angelesco} and~\ref{thm:AT}, we see that this approximation problem has a unique solution, up to a multiplicative normalization, if $\bm{\mu}$ is any Angelesco or any AT system $\mu$  for any multi-index $\bm{n}\in\N^r$. 
\end{rem}

\section{A two-point Hermite--Padé Problem associated with \texorpdfstring{$\xi_{\bm{n}}$}{xi_n}}\label{ss:HPtypeI}

Let us now show that the type I functions $\xi_{\bm{n}}$ satisfy an alternative two-point Hermite-Padé problem. For simplicity let us restrict ourselves to the case when $\bm{n}\in\N^r$ has all $n_j$'s even. The other cases can be handled in a similar way as for type II functions in the previous section. As before, let $\bm{\ell}=(\ell_1,\ldots,\ell_r)\in\N^r$ with $\bm{n} = 2\bm{\ell}$, so that, in particular, $|\bm{\ell}|=|\bm{n}|/2 \in \N$ and $\ell_j= n_j/2\in\N$. Denote also $L:=\max\{\ell_j\}$.

We pose a problem of finding Laurent polynomials
\begin{equation}    
\label{eq:HPtypeI3half}
 \xi_{\bm{n},j}(z) \in \operatorname{span}\big\{z^{k}\big\}_{k=-\ell_j}^{\ell_j-1},
    \qquad \upsilon_{\bm{n}}(z) \in \operatorname{span}\big\{z^{k}\big\}_{k=-L}^{L-1},
\end{equation}
that satisfy
\begin{alignat}{3}
    \label{eq:HPtypeI1half}
    & \sum_{j = 1}^r \xi_{\bm{n},j}(z)F_j(z) +\upsilon_{\bm{n}}(z) = \mathcal{O}(z^{\abs{\bm{\ell}}}), \qquad &&z \rightarrow 0, \\
    \label{eq:HPtypeI2half}
    &\sum_{j = 1}^r \xi_{\bm{n},j}(z)F_j(z) +\upsilon_{\bm{n}}(z) = \mathcal{O}(z^{-\abs{\bm{\ell}}}),  &&z \rightarrow \infty,
\end{alignat}
where $F_j$ are the Carath\'{e}odory functions~\eqref{eq:caratheodory function}. 

\begin{thm}
    If the Laurent polynomials 
    $\bm{\xi}_{\bm{n}} = (\xi_{\bm{n},1},\dots,\xi_{\bm{n},r})$ and $\upsilon_{\bm{n}}$ solve~\eqref{eq:HPtypeI3half}, \eqref{eq:HPtypeI1half}, \eqref{eq:HPtypeI2half}, then $\bm{\xi}_{\bm{n}}$ satisfies the orthogonality relations
    \begin{equation}\label{eq:orthogonality type I laurenthalf}
        \sum_{j = 1}^r \int \xi_{\bm{n},j}(w)w^{-k} d\mu_j(w) = 0, \qquad k = -\abs{\bm{\ell}}+1,\dots,\abs{\bm{\ell}}-1,
    \end{equation}
    and $\upsilon_{\bm{n}}$ is given by
    \begin{equation}\label{eq:upsilonhalf}
    \upsilon_{\bm{n}}(z) = \sum_{j=1}^r \int \frac{w+z}{w-z}\big(\xi_{\bm{n},j}(w)-\xi_{\bm{n},j}(z)\big)d\mu_j(w),
\end{equation}

    Conversely, any vector of Laurent polynomials $\bm{\xi}_{\bm{n}} = (\xi_{\bm{n},1},\dots,\xi_{\bm{n},r})$ with $\xi_{\bm{n},j} \in \operatorname{span}\set{z^k}_{k = -\ell_j}^{\ell_j-1}$, that satisfies 
    ~\eqref{eq:orthogonality type I laurenthalf}, solves the Hermite--Pad\'{e} problem ~\eqref{eq:HPtypeI1half}, \eqref{eq:HPtypeI2half} together with the Laurent polynomial given by~\eqref{eq:upsilonhalf}.
\end{thm}
\begin{proof}
    We let 
    \begin{align}\label{eq:lamdbahalf}
         \xi_{\bm{n},j} (z)&= \kappa_{\ell_j-1,j}z^{\ell_j-1} + \kappa_{\ell_{j-2},j}z^{\ell_j-2}+\dots + \kappa_{-\ell_j,j}z^{-\ell_j}, \\ 
         \label{eq:xihalf}
        \upsilon_{\bm{n}}(z) & = \lambda_{L-1}z^{L-1} + \lambda_{L-2}z^{L-2}+ \dots + \lambda_{-L}z^{-L}.
    \end{align}
    Let us put $\kappa_{k,j} = 0$ if $k< -\ell_j$ or $k> \ell_j-1$, and extend \eqref{eq:lamdbahalf} to bi-infinite power series. We then have 
    \begin{align*}
        & \xi_{\bm{n},j} (z)F_j(z) = a_{-\ell_j,j}z^{-\ell_j} + \dots + a_{|\bm{\ell}|-1,j}z^{|\bm{\ell}|-1} + \mathcal{O}(z^{|\bm{\ell}|}),  &z& \rightarrow 0, \\
        & \xi_{\bm{n},j} (z) F_j(z) = b_{\ell_j-1,j}z^{\ell_j-1} + \dots + b_{-|\bm{\ell}|+1,j}z^{-|\bm{\ell}|+1} + \mathcal{O}(z^{-|\bm{\ell}|}),  &z& \rightarrow \infty,
    \end{align*}
    where 
    \begin{align}\label{eq:coefficients of series 1 Ihalf}
         & a_{k,j} = \kappa_{k,j} c_{0,j}+ 2\kappa_{k-1,j}c_{1,j} + \ldots + 2\kappa_{-\ell_j,j} c_{\ell_j+k,j},  
         \\ 
         \label{eq:coefficients of series 2 Ihalf}
         & b_{k,j} = -\kappa_{k,j} c_{0,j} - 2\kappa_{k+1,j} c_{-1,j} - \ldots - 2 \kappa_{\ell_j-1,j} c_{-\ell_j+k+1,j}
         .
    \end{align}

    If \eqref{eq:HPtypeI1half}--\eqref{eq:HPtypeI2half} hold true, then    
    \begin{alignat}{3}
        \label{eq:PadeToOrthoEq Ihalf AGAIN1}
        \sum_{j=1}^r a_{k,j} & + \lambda_k = 0, \qquad && k\le |\bm{\ell}|-1,
        \\
        \label{eq:PadeToOrthoEq Ihalf AGAIN2}
        \sum_{j=1}^r b_{k,j} & + \lambda_k = 0, \qquad && k\ge |\bm{\ell}|-1.
    \end{alignat}
    This implies
     \begin{equation}
         \label{eq:PadeToOrthoEq Ihalf}
           \sum_{j=1}^r a_{k,j} = \sum_{j=1}^r b_{k,j},  \quad k = -|\bm{\ell}|+1,\dots,|\bm{\ell}|-1,
     \end{equation}
    which, combined with~\eqref{eq:coefficients of series 1 Ihalf}--\eqref{eq:coefficients of series 2 Ihalf} and~\eqref{eq:moments of measures}, implies~\eqref{eq:orthogonality type I laurenthalf}.
    

    Observe also that if $\bm{\xi}_{\bm{n}}$ is determined, then~\eqref{eq:HPtypeI1half}--\eqref{eq:HPtypeI2half} uniquely determines all the coefficients $\lambda_j$. Indeed, \eqref{eq:PadeToOrthoEq Ihalf AGAIN1}--\eqref{eq:PadeToOrthoEq Ihalf AGAIN2} determine all $\lambda_j$ for all $j$. Note that since $a_{k,j} = 0$ if $k < -L $ and $b_{k,j} = 0$ if $k > L-1$, we get that $\upsilon_{\bm{n}}$ is of the form \eqref{eq:xihalf}. 

    Denote
    \begin{equation}\label{eq:RtypeIhalf}
        R_{\bm{n}}(z) = \sum_{j=1}^r \int \frac{w+z}{w-z} \xi_{\bm{n},j}(w) d\mu_j(w).
    \end{equation}
    Then~\eqref{eq:kernel} and~\eqref{eq:orthogonality type I laurenthalf} together imply 
    \begin{equation}
    \label{eq:Rhalf}
    R_{\bm{n}}(z) =
    \begin{cases}
        \mathcal{O}(z^{|\bm{\ell}|}), & z \to 0,
        \\
        \mathcal{O}(z^{-|\bm{\ell}|}), & z \to \infty.
    \end{cases}
    \end{equation}

    Let $\widetilde{\upsilon}_{\bm{n}}(z)$ be the right-hand side of~\eqref{eq:upsilonhalf}. It can be rewritten as  
    \begin{equation*}
        \widetilde{\upsilon}_{\bm{n}}(z)
        =
        R_{\bm{n}}(z) - \sum_{j=1}^r \xi_{\bm{n},j}(z) F_j(z).
    \end{equation*}
        
    Combining this with~\eqref{eq:Rhalf}, we see that
\eqref{eq:HPtypeI1half}--\eqref{eq:HPtypeI2half} hold with the same $\bm{\xi}_{\bm{n}}$ but with $\upsilon_{\bm{n}}$ replaced by $\widetilde\upsilon_{\bm{n}}$. It is clear that $\widetilde{\upsilon}_{\bm{n}}(z)$ is a polynomial in $\operatorname{span}\big\{z^k\big\}_{k = -L}^{L-1}$.  
    But the proof showed that \eqref{eq:HPtypeI3half}, \eqref{eq:HPtypeI1half}, \eqref{eq:HPtypeI2half} uniquely determine all the coefficients of $\widetilde\upsilon_{\bm{n}}$. This shows that $\widetilde\upsilon_{\bm{n}} = \upsilon_{\bm{n}}$ and proves \eqref{eq:upsilonhalf}.

    Conversely, given $\bm{\xi}_{\bm{n}}$ with $\xi_{\bm{n},j} \in \operatorname{span}\set{z^k}_{k = -\ell_j}^{\ell_j-1}$, that satisfies ~\eqref{eq:orthogonality type I laurenthalf}, we define $\upsilon_{\bm{n}}$ as in~\eqref{eq:upsilonhalf} and $R_{\bm{n}}$ as in~\eqref{eq:RtypeIhalf}. As above, we show that it satisfies~\eqref{eq:Rhalf}, which then becomes ~\eqref{eq:HPtypeI1half}, \eqref{eq:HPtypeI2half}.
\end{proof}

\begin{cor}\label{cor:HPvsMLOPUCtypeI}
    Assume 
    all $n_j$'s are even for  $j=1,\ldots,r$. Then ~\eqref{eq:HPtypeI3half}, \eqref{eq:HPtypeI1half}, \eqref{eq:HPtypeI2half}   
    has a unique solution $\bm\xi_{\bm{n}}$ satisfying the normalization~\eqref{eq:type I def SWAPPED} if and only if  $\bm{n}$ is $\phi$-normal (see Definition~\ref{def:normal B}) with respect to $\bm\mu$. 
    The solutions are then multiples of the vector defined by \eqref{eq:type I coefficients}-\eqref{eq:type I def SWAPPED}.
\end{cor}

By taking reversal in \eqref{eq:HPtypeI1half}-\eqref{eq:HPtypeI2half} and using $F_j^\sharp(z) = -F_j(z)$ we obtain that the solution of the Hermite-Padé problem
\begin{align}
    & \xi^\sharp_{\bm{n},j}(z) \in \operatorname{span}\big\{z^{k}\big\}_{k=-\ell_j+1}^{\ell_j},
    & \upsilon^\sharp_{\bm{n}}(z) \in \operatorname{span}\big\{z^{k}\big\}_{k=-L+1}^{L},
\end{align}
with
\begin{alignat}{3}
    & \sum_{j = 1}^r \xi^\sharp_{\bm{n},j}(z)F_j(z) - \upsilon^\sharp_{\bm{n}}(z) = \mathcal{O}(z^{\abs{\bm{\ell}}}), \qquad &&z \rightarrow 0, \\
    &\sum_{j = 1}^r \xi^\sharp_{\bm{n},j}(z)F_j(z) -\upsilon^\sharp_{\bm{n}}(z) = \mathcal{O}(z^{-\abs{\bm{\ell}}}),  &&z \rightarrow \infty,
\end{alignat}
is solved by $\bm\xi^\sharp_{\bm{n}}(z) = \overline{\bm{\xi}_{\bm{n}}(1/\bar{z})}$, and $\upsilon_{\bm{n}}^\sharp$ is given explicitly by
\begin{equation}
        \upsilon^\sharp_{\bm{n}}(z) = \sum_{j = 1}^r \int \frac{w+z}{w-z}\big(\xi^\sharp_{\bm{n},j}(z) - \xi^\sharp_{\bm{n},j}(w) \big) d\mu_j(w). 
    \end{equation}

\bibsection

\begin{biblist}[\small]

\bib{Angelesco}{article}{
   author={Angelesco, A.},
   title={Sur deux extensions des fractions continues alg\'{e}briques},
   language={French},
   journal={C. R. Acad. Sci. Paris },
   volume={168},
   date={1919},
   pages={262-263},
}

\bib{Aptekarev}{article}{
   author={Aptekarev, A.I.},
   title={Multiple orthogonal polynomials},
   journal={J. Comput. Appl. Math.},
   volume={99},
   year={1998},
   pages={423--447},
}

\bib{CouVA}{article}{
	author={Coussement, E.},
	author={Van Assche, W.},
	title={Multiple orthogonal polynomials associated with the modified
		Bessel functions of the first kind},
	journal={Constr. Approx.},
	volume={19},
	date={2003},
	number={2},
	pages={237--263},
}

\bib{MOPUC2}{article}{
   author={Cruz-Barroso, R.},
   author={Díaz Mendoza, C.},
   author={Orive, R.},
   title={Multiple orthogonal polynomials on the unit circle. Normality and recurrence relations},
   journal={J. Comput. Appl. Math.},
   volume={284},
   year={2015},
   pages={115--132},
}


\bib{HueMan}{article}{
    AUTHOR={Huertas, E.J.},
    AUTHOR={Ma\~{n}as, M.},
    TITLE={Mixed Multiple Orthogonal Laurent Polynomials on the Unit Circle},
  journal={},
   volume={},
   date={},
   number={},
   pages={arXiv:2411.10834},
}

\bib{Ismail}{book}{
   author={Ismail, M.E.H.},
   title={Classical and Quantum Orthogonal
Polynomials in One Variable},
   isbn={9780521782012},
   series={Encyclopedia of Mathematics and its Applications},
   Volume={98},
   publisher={Cambridge University Press},
   year={2005},
}

\bib{JNT}{article}{
    author={Jones, W.B.},
    author={Njåstadt, O.},
    author={Thron, W.J.},
    title={Moment Theory, Orthogonal Polynomials, Quadrature, and Continued Fractions Associated with the unit Circle},
    journal={Bulletin of the London Mathematical Society},
    volume={21},
    number={2},
    year={1989},
    pages={113–152}
}

\bib{KNik}{article}{
    AUTHOR={Kozhan, R.},
    TITLE={Nikishin Systems on the Unit Circle},
  journal={},
   volume={},
   date={},
   number={},
   pages={under submission, arXiv:2410.20813},
}

\bib{KVMOPUC}{article}{
    AUTHOR={Kozhan, R.},
    AUTHOR={Vaktnäs, M.},
    TITLE={Szeg\H{o} recurrence for multiple orthogonal polynomials on the unit circle},
    JOURNAL={Proc. Amer. Math. Soc.},
    VOLUME={152},
    NUMBER={11},
    YEAR={2024},
    PAGES={2983-2997},
    ISSN={1088-6826,0002-9939},
}

\bib{KVNikInt}{article}{
    AUTHOR={Kozhan, R.},
    AUTHOR={Vaktnäs, M.},
    TITLE={Zeros of multiple orthogonal polynomials: location and interlacing},
  journal={Bull. of London Math. Soc.},
   volume={},
   date={},
   number={},
   pages={arXiv:2503.15122},
}

\bib{KVHP}{article}{
     AUTHOR={Kozhan, R.},
     AUTHOR={Vaktnäs, M.},
     TITLE={Szeg\H{o} mapping and Hermite--Pad\'{e} polynomials for multiple othogonality on the unit circle},
   journal={},
    volume={},
    date={},
    YEAR={2025},
    number={},
    pages={arXiv},
}

\bib{KreNud}{book}{,
    AUTHOR = {Kre\u{i}n, M. G.},
    AUTHOR = {Nudel\cprime man, A. A.},
     TITLE = {The {M}arkov moment problem and extremal problems},
    SERIES = {Translations of Mathematical Monographs},
    VOLUME = {Vol. 50},
  PUBLISHER = {American Mathematical Society, Providence, RI},
      YEAR = {1977},
     PAGES = {v+417}
    }
 
\bib{Kui}{article}{
   AUTHOR = {Kuijlaars, A.B.J.},
     TITLE = {Multiple orthogonal polynomial ensembles},
    JOURNAL = {Recent trends in orthogonal polynomials and approximation
              theory, Contemp. Math., Amer. Math. Soc., Providence, RI},
    VOLUME = {507},
     PAGES = {155--176},
      YEAR = {2010},
      ISBN = {978-0-8218-4803-6},
}

\bib{Applications}{article}{
   author={Martínez-Finkelshtein, A.}
   author={Van Assche, W.},
   title={WHAT IS...A Multiple Orthogonal Polynomial?},
   journal={Not. Am. Math. Soc.},
   volume={63},
   year={2016},
   pages={1029--1031},
}

\bib{MOPUC1}{article}{
   author={Mínguez Ceniceros, J.},
   author={Van Assche, W.},
   title={Multiple orthogonal polynomials on the unit circle},
   journal={Constr. Approx.},
   volume={28},
   year={2008},
   pages={173--197},
}

\bib{bookNS}{book}{
    AUTHOR = {Nikishin, E.M.},
    AUTHOR = {Sorokin, V.N.},
     TITLE = {Rational approximations and orthogonality},
    SERIES = {Translations of Mathematical Monographs},
    VOLUME = {92},
      NOTE = {Translated from the Russian by Ralph P. Boas},
 PUBLISHER = {American Mathematical Society, Providence, RI},
      YEAR = {1991},
     PAGES = {viii+221},
      ISBN = {0-8218-4545-4},
}

\bib{OPUC1}{book}{
   author={Simon, B.},
   title={Orthogonal Polynomials on the Unit Circle, Part 1: Classical Theory},
   isbn={0-8218-3446-0},
   series={Colloquium Lectures},
   Volume={54}
   publisher={American Mathematical Society},
   year={2004},
}

\bib{PehSte}{article}{
   author={Peherstorfer, F.},
   author={Steinbauer, R.},
   title={Characterization of orthogonal polynomials with respect to a
   functional},
   booktitle={Proceedings of the International Conference on Orthogonality,
   Moment Problems and Continued Fractions (Delft, 1994)},
   journal={J. Comput. Appl. Math.},
   volume={65},
   date={1995},
   number={1--3},
   pages={339--355},
}

\bib{OPUC2}{book}{
   author={Simon, B.},
   title={Orthogonal Polynomials on the Unit Circle, Part 2: Spectral Theory},
   isbn={978-0-8218-4864-7},
   series={Colloquium Lectures},
   Volume={54}
   publisher={American Mathematical Society},
   year={2005},
}

\bib{Szego}{article}{
    author={Szeg\H{o}, G.},
    title={Über die Entwickelung einer analytischen Funktion nach den Polynomen eines Orthogonalsystems},
    issn={0025-5831},
    journal={Mathematische Annalen},
    volume={82},
    year={1921},
    pages={188-212},
}
	
\bib{NNRR}{article}{
   author={Van Assche, W.},
   title={Nearest neighbor recurrence relations for multiple
orthogonal polynomials},
   journal={J. Approx. Theory},
   volume={163},
   year={2011},
   pages={1427--1448},
}

\bib{VAPade}{article}{
    AUTHOR = {Van Assche, W.},
     TITLE = {Pad\'e{} and {H}ermite-{P}ad\'e{} approximation and
              orthogonality},
   JOURNAL = {Surv. Approx. Theory},
    VOLUME = {2},
      YEAR = {2006},
     PAGES = {61--91},
}

\end{biblist}


\end{document}